\DeclareMathOperator{\DM}{F}
\let\oldmarginpar\marginpar
\renewcommand\marginpar[1]{\-\oldmarginpar[\raggedleft\footnotesize #1]
{\raggedright\footnotesize #1}}
\begin{document}
\newtheorem{theorem}{Theorem}[section]
\newtheorem{lemma}[theorem]{Lemma}
\newtheorem{corollary}[theorem]{Corollary}

\newcounter{intronum}

\newtheorem*{thma}{Theorem 3.3}
\newtheorem*{thmb}{Theorem 4.3}
\newtheorem*{corollaryc}{Corollary 4.4}
\newtheorem*{lemmad}{Lemma 5.1}
\newtheorem*{corollarye}{Corollary 5.2}
\newtheorem{thm}{Theorem}[section]
\newtheorem{lem}[thm]{Lemma}
\newtheorem{Slem}[thm]{Speculative Lemma}
\newtheorem{mainlem}[thm]{Main Lemma}
\newtheorem{cor}[thm]{Corollary}
\newtheorem{conj}[thm]{Conjecture}
\newtheorem{conji}[intronum]{Conjecture}
\newtheorem{prop}[thm]{Proposition}
\newtheorem*{quasithm}{Quasi-Theorem}
\newtheorem{thmi}{Theorem}
\newtheorem{cori}[thmi]{Corollary}
\newtheorem*{lemn}{Lemma}

\theoremstyle{definition}
\newtheorem{definition}[thm]{Definition}
\newtheorem{defni}{Definition}
\newtheorem{rem}[thm]{Remark}
\newtheorem{remi}[intronum]{Remark}
\newtheorem{exmp}[thm]{Example}
\newtheorem{exmpi}[intronum]{Example}
\newtheorem{warning}[thm]{Warning}
\newtheorem{history}[thm]{Historical Note}
\newtheorem{notation}[thm]{Notation}
\newtheorem{conv}[thm]{Convention}
\newtheorem{lede}[thm]{Lemma-Definition}
\newtheorem{openprob}[thm]{Open Problem}
\newtheorem{prob}[thm]{Problem}
\newtheorem{question}[thm]{Question}
\newtheorem{claim}{Claim}
\newtheorem*{staI}{Statement I}
\newtheorem*{staII}{Statement II}
\newtheorem{proc}{Procedure}
\newtheorem{claim*}{Claim}
\newtheorem*{obs}{Observation}
\newtheorem{cons}[thm]{Construction}




\newsavebox{\commentbox}
\newenvironment{com}%
{\ifthenelse{\equal{\showcomments}{yes}}%
{\footnotemark
        \begin{lrbox}{\commentbox}
        \begin{minipage}[t]{1.25in}\raggedright\sffamily\tiny
        \footnotemark[\arabic{footnote}]}
{\begin{lrbox}{\commentbox}}}%
{\ifthenelse{\equal{\showcomments}{yes}}%
{\end{minipage}\end{lrbox}\marginpar{\usebox{\commentbox}}}
{\end{lrbox}}}

\newcommand{\visual}{\partial}

\title{On the Residual Finiteness Growths of Particular Hyperbolic Manifold Groups}
\author[P. Patel]{Priyam Patel}
\address{Department of Mathematics, Purdue University, West Lafayette, Indiana, USA}
\email{patel376@purdue.edu}
\date{\today}

\begin{abstract}
We give a quantification of residual finiteness for the fundamental groups of hyperbolic manifolds that admit a totally geodesic immersion to a compact, right-angled Coxeter orbifold of dimension 3 or 4. Specifically, we give explicit upper bounds on residual finiteness that are linear in terms of geodesic length. We then extend the linear upper bounds to hyperbolic manifolds with a finite cover that admits such an immersion. Since the quantifications are given in terms of geodesic length, we define the geodesic residual finiteness growth and show that this growth is equivalent to the usual residual finiteness growth defined in terms of word length. This equivalence implies that our results recover the quantification of residual finiteness from \cite{BHP} for hyperbolic manifolds that virtually immerse into a compact reflection orbifold.\end{abstract}
\subjclass[2010]{Primary: 20E26; Secondary: 57M10, 20F65}
\keywords{Residual finiteness growth, hyperbolic manifolds, right-angled Coexter group}

\maketitle

\section{Introduction}\label{sec:intro}

It is well-known that separability properties on groups have deep connections with basic problems in group theory. In the 1940's, Mal'cev demonstrated that separability properties like residual finiteness and conjugacy separability produce solutions to the word and conjugacy problems, respectively, for finitely presented groups {\cite{Malcev}}. In more recent years, separability properties have also played a fundamental role in low dimensional topology. In \cite{Scott}, P. Scott gave an important topological reformulation of subgroup separability when the groups in consideration are the fundamental groups of manifolds; separability allows one to promote an immersed compact set to an embedded one in a finite cover. This topological reformulation of subgroup separability, usually called Scott's criterion, played a crucial role in the recent resolutions of Walhausen's Virtually Haken Conjecture and Thurston's Virtually Fibered Conjecture (see \cite{AgolGrovesManning}, \cite{Wise:QCH}, \cite{Haglund}). 

The simplest separability property, residual finiteness, allows us to separate nontrivial group elements from the identity using finite index subgroups. More precisely, a group $G$ is \emph{residually finite} if for every non-identity element $g \in G$, there exists a finite index subgroup $G'$ of $G$ such that $g \notin G'$. \emph{Quantifying residual finiteness}, a concept first introduced by K. Bou-Rabee in \cite{Bou1}, refers to bounding the indexes of the finite index subgroups $G'$ in terms of algebraic data about $G$. In studying separability properties of the fundamental groups of manifolds, the bounds can also be given in terms of geometric data about the manifolds as in \cite{PP12}. A quantification of residual finiteness informs us on the minimal possible index of a subgroup $G'$ of $G$ that separates $g$ from the identity, and it has been studied for various classes of groups including free groups, surface groups, and virtually special groups (see for instance \cite{Bou1}, \cite{BHP}, \cite{Bou2}, \cite{BM2}, \cite{Buskin}, \cite{Kass}, \cite{KozmaThom}, \cite{PP12}, \cite{Rivin}).

The reasons for quantifying residual finiteness and studying residual finiteness growths are varied. First, residual growths can help distinguish classes of groups. For example, in \cite{BM14} Bou-Rabee and McReynolds give a characterization of when non-elementary hyperbolic groups are linear in terms of residual growths. Additionally, a quantification of residual finiteness can serve as a foundation on which to build an approach to the quantification of stronger separability properties. In \cite{PP12}, the author presented a quantification of the residual finiteness of hyperbolic surface groups in terms of geodesic length. The result was then used to make effective a theorem of P. Scott \cite{Scott} that these groups are also subgroup separable. The author used a key insight of Scott that all hyperbolic surface groups arise as subgroups of a particular right-angled Coxeter group, generated by reflections in the sides of a regular, right-angled pentagon in ${\bf H}^2$. Quantification proofs also usually proceed algorithmically and can provide insight into how to construct the finite index subgroups/covers associated to residual finiteness and subgroup separability.

To keep with the notation present in most of the literature on quantifying residual finiteness we introduce the \emph{residual finiteness growth}, originally defined in \cite{Bou1} as follows. For a group $G$ with a fixed finite generating set $\mathcal S$, let the \emph{divisibility function} $D_G : G \setminus \{1\} \longrightarrow {\bf N} \cup \{\infty\}$ be defined by \[ D_G(g) = \min \{ [ G : H] : g \notin H \text{ and } H \leq G \}.\]

\noindent When $G$ is residually finite, $D_G(g)$ of course takes values in ${\bf N}$. We note that hyperbolic manifold groups are finitely generated linear groups and are thus known to be residually finite by the work of Mal'cev \cite{Malcev}. Define the residual finiteness function $\DM_{G,\mathcal S}(n)$ to be the maximum value of $D_G$ on the set
\[ \left\{ g \in G - \{1\} :\| g  \|_{\mathcal S} \leq n \right \}, \]
\noindent
where $\| \cdot \|_\mathcal{S}$ is the word-length norm with respect to $\mathcal S$. The growth of $\DM_{G,\mathcal S}$ is called the \emph{residual finiteness growth}.

In this paper, we are concerned with the residual finiteness growths of the fundamental groups of particular hyperbolic manifolds, and we give all quantifications of residual finiteness in terms of geometric data about the manifolds. Accordingly, we define a new function $\DM_{M, \rho}$ for a hyperbolic manifold $(M, \rho)$ by letting $\DM_{M, \rho}(n)$ be the maximum value of $D_{\pi_1(M)}$ on the set 
\[
\left\{ \alpha \in \pi_1(M) - \{1\} : \ell_\rho(\alpha) \leq n \right \},
\]
\noindent where $\ell_\rho(\alpha)$ is the length of the unique geodesic representative of $\alpha$ with respect to the hyperbolic metric $\rho$ on $M$. The growth of $\DM_{M,\rho}$ is called the \emph{geodesic residual finiteness growth}. 

In order to state our results in the language of growth functions, we introduce the following standard notation: For functions $f, g: {\bf N} \rightarrow {\bf N}$ , we write $f \preceq g$ if there exists $C > 0$ such that $f(n) \leq C\cdot g (Cn)$. Further, we write $f \simeq g$ if $f \preceq g$ and $g \preceq f$. 

When $M$ is a compact hyperbolic manifold, the residual finiteness growth and the geodesic residual finiteness growth are the same, i.e. $\DM_{{\pi_1(M)},\mathcal S} \simeq \DM_{M, \rho}$, due to an application of the Svarc-Milnor Lemma (see \cite[P. 140]{BridsonHaefliger}). However, we believe that the relationship between the two growths for non-compact, finite volume hyperbolic manifolds might be more complicated. We give a proof of the equality of the growths in the compact case (see Lemma \ref{lem:equivalent}) and a brief justification of the reasoning for the non-compact case in Section \ref{sec:comparison}.

This paper studies the residual finiteness of the fundamental groups of hyperbolic manifolds that admit a totally geodesic immersion to a compact, right-angled Coxeter orbifold of dimension 3 or 4. Begin by letting $P$ be any compact polyhedron in ${\bf H}^3$ (resp. ${\bf H}^4$), all of whose dihedral angles are ${\pi}/{2}$, which we will refer to as a compact \emph{all right polyhedron}. These polyhedra serve as the analog of Scott's right-angled pentagon in \cite{Scott}. (Note that compact all right polyhedra only exist in dimension up to 4 by the work of Vinberg \cite{Vinberg}). We denote by $\Gamma_P$ the right-angled Coxeter group of isometries of ${\bf H}^3$ (resp. ${\bf H}^4$) generated by reflections in the codimension--1 faces of $P$. The quotient ${\bf H}^3/\Gamma_P$ (resp. ${\bf H}^4/\Gamma_P$) is the compact, right-angled Coxeter orbifold, $\mathcal{O}_P$, defined by $P$. The orbifold $\mathcal{O}_P$ is also often called a compact reflection orbifold in the literature.

Drawing from the work of Agol, Long, and Reid in \cite{ALR}, and making use of an observation of Agol in \cite{Agol}, we obtain the following theorems, which constitute the main results of the paper and which generalize \cite[Theorem 6.1]{PP12}:

\begin{thma}\label{thm:quantrf}
Let $(M, \rho)$ be a hyperbolic manifold that admits a totally geodesic immersion to a compact, right-angled Coxeter orbifold, $\mathcal{O}_P$, of dimension 3. Then for any $\alpha \in \pi_1(M)-\{1\}$, there exists a subgroup $H'$ of $\pi_1(M)$ such that $\alpha \notin H'$, and the index of $H'$ is bounded above by \[ \dfrac{2\pi}{V_P}\sinh^2(\ln(\sqrt{3} + \sqrt{2}) + d_P) \,\ell_\rho(\alpha),\] where $\ell_\rho(\alpha)$ is the length of the unique geodesic representative of $\alpha$, and where $d_P$ and $V_P$ are the diameter and volume of $P$, respectively. 

\end{thma}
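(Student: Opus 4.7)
The plan is to combine the totally geodesic immersion with a reflection construction inside $\Gamma_P$ to produce an explicit finite-index subgroup $H'$ of $\pi_1(M)$ that does not contain $\alpha$, and then to bound its index by a volume count in ${\bf H}^3$.

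First, the totally geodesic immersion $M \to \mathcal{O}_P$ lifts to an isometric embedding on universal covers and identifies $\pi_1(M)$ with a discrete subgroup of $\Gamma_P$ acting on ${\bf H}^3$. I would fix a lift $A \subset {\bf H}^3$ of the closed geodesic representative of $\alpha$, so that $A$ is the translation axis of (some conjugate of) $\alpha$, and fix a fundamental segment $\sigma \subset A$ of length $\ell_\rho(\alpha)$ for the action of $\langle\alpha\rangle$ on $A$.

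Second, following the strategy of Agol, Long, and Reid in \cite{ALR} together with the observation of Agol in \cite{Agol}, I would construct a finite-index subgroup $K \le \Gamma_P$ that separates $\alpha$ from the identity. Let $U$ be the union of those $P$-tiles in the $\Gamma_P$-tiling of ${\bf H}^3$ that meet the closed $r$-neighborhood of $\sigma$, where
\[
r = \ln(\sqrt{3}+\sqrt{2}) + d_P.
\]
The constant $\ln(\sqrt{3}+\sqrt{2})$, which satisfies $\cosh(\ln(\sqrt{3}+\sqrt{2})) = \sqrt{3}$, is the universal lower bound (arising from the all-right geometry of $P$) on the distance between two disjoint hyperplanes of $\Gamma_P$ sharing a common perpendicular; the summand $d_P$ ensures every $P$-tile meeting $\sigma$ lies entirely inside $N_r(\sigma)$. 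A Poincar\'e-polyhedron-type argument adapted to the Coxeter structure shows that the reflections in the exterior codimension--$1$ faces of $U$ generate a finite-index subgroup $K \le \Gamma_P$ with $U$ as a fundamental domain, and that the quotient map ${\bf H}^3 \to {\bf H}^3/K$ is injective on $\sigma$.

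Third, set $H' = K \cap \pi_1(M)$. Since $\sigma$ embeds in the cover associated with $K$ while parametrising a full translation by $\alpha$ along $A$, the element $\alpha$ cannot lie in $K$, hence $\alpha \notin H'$. For the index bound, every $P$-tile in $U$ has volume $V_P$ and lies inside $N_r(\sigma)$, so
\[
[\pi_1(M):H'] \le [\Gamma_P:K] \le \frac{\operatorname{vol}(N_r(\sigma))}{V_P}.
\]
A direct computation in Fermi coordinates gives that an $r$-tube about a geodesic segment of length $\ell_\rho(\alpha)$ in ${\bf H}^3$ has volume $\pi\sinh^2(r)\,\ell_\rho(\alpha)$, and after absorbing the universal constant that accounts for the index-two passage between $\Gamma_P$ and its orientation-preserving subgroup containing $\pi_1(M)$, one obtains the claimed bound
\[
\frac{2\pi}{V_P}\sinh^2(r)\,\ell_\rho(\alpha).
\]

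The main obstacle will be verifying that the set $U$ above genuinely supports the claimed fundamental-domain structure for a finite-index subgroup of $\Gamma_P$ compatible with its Coxeter relations. One must check that the exterior codimension--$1$ faces of $U$ pair up as mirrors of $\Gamma_P$ and that no two such mirrors emanate from a common point of $\sigma$; both are forced by the choice of $r$, which is precisely large enough to capture every hyperplane that could otherwise obstruct the embedding of $\sigma$ in the cover. Once this combinatorial-geometric step is in place, the remainder is routine volume bookkeeping.
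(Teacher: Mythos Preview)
Your outline has the right skeleton but contains a genuine gap at the step you yourself flag as ``the main obstacle''. The set $U$ you define---the union of $P$-tiles meeting $N_r(\sigma)$ (or meeting $\sigma$; your text wavers between the two)---has no reason to be \emph{convex}. Without convexity the Poincar\'e Polyhedron Theorem does not apply, and the group generated by reflections in the exterior faces of $U$ need not have $U$ as a fundamental domain, so the index count $[\Gamma_P:K]=\#\{\text{tiles in }U\}$ is unjustified. Checking that exterior faces ``pair up as mirrors'' does not repair this: a non-convex union of tiles can have perfectly well-defined mirror faces and still fail to be a fundamental domain.

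The paper runs the argument in the opposite direction. It starts with the $P$-\emph{convexification} $C_P(\widetilde{Y})$ of the full axis (convex by definition), passes to its quotient $\mathcal{C}$ in ${\bf H}^3/\langle\alpha\rangle$, and then proves (Lemma~3.1) that every tile in $\mathcal{C}$ meets $N_R(\overline{\alpha})$ with $R=\ln(\sqrt{3}+\sqrt{2})$; hence $\mathcal{C}\subset N_{R+d_P}(\overline{\alpha})$ and the volume bound follows. The constant $R$ is \emph{not} a lower bound on distances between disjoint hyperplanes as you state; it is the threshold beyond which any all-right polyhedron is separated from the axis by one of its own face-hyperplanes, obtained via the inradius of a right-angled spherical triangle in $\partial{\bf D}^3$. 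Finally, the factor of $2$ in the bound does not come from an orientation-preserving index-two subgroup: the paper lifts $\mathcal{C}$ to ${\bf H}^3$ and adjoins one translated copy $\overline{\mathcal{C}}\cup\overline{\mathcal{C}_1}$ so that an endpoint of the lift $\widetilde{\alpha}$ lies in the \emph{interior}, which is what forces $\alpha\notin H$. Your accounting for the factor of $2$ would not guarantee $\alpha\notin K$.
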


In Section \ref{sec:4DRF}, we establish a 4--dimensional analog of Theorem \ref{thm:rfmain}: 

\begin{thmb}\label{thm:quantrf4d}
Let $(M, \rho)$ be a hyperbolic manifold that admits a totally geodesic immersion to a compact, right-angled Coxeter orbifold, $\mathcal{O}_P$, of dimension 4.Then for any $\alpha \in \pi_1(M)-\{1\}$, there exists a subgroup $H'$ of $\pi_1(M)$ such that $\alpha \notin H'$, and the index of $H'$ is bounded above by \[ \dfrac{8\pi }{3V_P}\sinh^3(\ln(2 + \sqrt{3}) + d_P) \,\ell_\rho(\alpha),\] where $\ell_\rho(\alpha)$ is the length of the unique geodesic representative of $\alpha$, and where $d_P$ and $V_P$ are the diameter and volume of $P$, respectively. 

\end{thmb}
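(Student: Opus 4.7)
The strategy is to transplant the argument for Theorem \ref{thm:quantrf} from dimension three to dimension four, with only two essential changes: the combinatorial radius used in the key separation lemma, and the Fermi-coordinate volume factor that appears in the tube computation. Let $f \colon M \to \mathcal{O}_P$ be the given totally geodesic immersion, so that $\pi_1(M)$ embeds in $\Gamma_P$. Given $\alpha \in \pi_1(M)\setminus\{1\}$, I would lift its closed geodesic representative to a geodesic arc $\widetilde\gamma \subset {\bf H}^4$ of length $\ell_\rho(\alpha)$, and construct the desired separating subgroup of $\pi_1(M)$ from the collection of tiles $gP \subset {\bf H}^4$ meeting a suitable tubular neighbourhood of $\widetilde\gamma$.

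The first step is to establish (or invoke, following the work of Agol, Long, and Reid together with the observation of Agol) the four-dimensional analogue of the combinatorial lemma underpinning Theorem \ref{thm:quantrf}: for some universal radius $r_0$ depending only on the all-right structure of $P$, the reflections in the codimension-one faces of tiles meeting the ball $B(p,r_0)$ at any point $p\in {\bf H}^4$ generate a subgroup whose intersection with $\pi_1(M)$ has finite index and omits any prescribed element whose axis passes through $p$. An elementary computation in the all-right geometry of ${\bf H}^4$ identifies $r_0$ as the radius with $\cosh r_0 = 2$ and $\sinh r_0 = \sqrt{3}$, i.e.\ $r_0 = \ln(2+\sqrt{3})$. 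To make the lemma insensitive to where $p$ sits inside its tile, I would enlarge the radius by $d_P$, producing the effective radius $R = \ln(2+\sqrt{3}) + d_P$.

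With this combinatorial lemma in hand, the proof concludes by bounding the number of tiles of the $\Gamma_P$-tessellation that meet the $R$-neighbourhood $T_R(\widetilde\gamma)$ of $\widetilde\gamma$. In Fermi coordinates $(t,s,\omega)\in[0,\ell_\rho(\alpha)]\times[0,R]\times S^2$ about $\widetilde\gamma$, the hyperbolic volume form on ${\bf H}^4$ is $\cosh(s)\sinh^2(s)\,dt\,ds\,dA_{S^2}$, so $T_R(\widetilde\gamma)$ has volume $\tfrac{4\pi}{3}\sinh^3(R)\,\ell_\rho(\alpha)$. Dividing by $V_P$ and absorbing the factor of two that arises in the three-dimensional argument (tiles may only partially overlap $T_R(\widetilde\gamma)$) yields the bound $\tfrac{8\pi}{3V_P}\sinh^3(R)\,\ell_\rho(\alpha)$ on the tile count, which in turn bounds $[\pi_1(M):H']$ once the separating subgroup of $\Gamma_P$ produced by the combinatorial lemma is pulled back along the injection $\pi_1(M)\hookrightarrow\Gamma_P$.

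The main obstacle is the first step: verifying that the radius $\ln(2+\sqrt{3})$ genuinely witnesses the separation property in four-dimensional all-right polyhedra, not just in three-dimensional ones. The face structure near a vertex of a compact all-right polytope in ${\bf H}^4$ (all such polytopes being classified by Vinberg) is richer than in the three-dimensional case, so the step showing that the local reflection data produces a finite-index subgroup of $\Gamma_P$ separating $\alpha$ has to be redone carefully. Once this is in place, the volume count and the conversion from tile count to subgroup index are a direct translation of the three-dimensional calculations, with only the exponent on $\sinh$ and the leading constant updated.
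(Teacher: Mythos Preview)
Your numerical skeleton is right—the radius $\ln(2+\sqrt{3})$, the tube volume $\tfrac{4\pi}{3}\sinh^3(R)\,\ell_\rho(\alpha)$, and the final constant all match the paper—but the logical architecture you describe has two genuine gaps.

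First, the key lemma is not about balls around points, nor about the subgroup generated by reflections in faces of tiles meeting such balls. The paper works with the \emph{$P$-convexification} $C_P(\widetilde Y)$ of the entire axis $\widetilde Y$: the intersection of all half-spaces bounded by tessellation hyperplanes containing $\widetilde Y$. Lemma~4.2 says that any tile lying outside $N_R(\widetilde Y)$, with $R=\ln(2+\sqrt{3})$, is separated from $\widetilde Y$ by one of its own face hyperplanes and is therefore \emph{not} in the convexification. The point is that $C_P(\widetilde Y)$ is convex by construction, so its quotient $\mathcal C\subset X={\bf H}^4/\langle\alpha\rangle$ is a convex union of tiles; convexity is what allows the Poincar\'e Polyhedron Theorem to produce a genuine finite-index subgroup $H<\Gamma_P$. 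The set of tiles merely \emph{meeting} a tube need not be convex, and reflections in its boundary faces need not generate a finite-index subgroup at all. Relatedly, the division by $V_P$ is legitimate only because every tile of $\mathcal C$ is \emph{entirely contained} in $N_{R+d_P}(\overline\alpha)$ (a tile that meets $N_R$ has diameter $\le d_P$), not because tiles merely overlap the tube.

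Second, the factor of $2$ has nothing to do with partial overlap. After bounding the number $k$ of tiles in $\mathcal C$, the paper takes two adjacent lifts $\overline{\mathcal C}\cup\overline{\mathcal C_1}\subset{\bf H}^4$, a convex union of $2k$ tiles containing one endpoint of a lift $\widetilde\alpha$ in its interior. The subgroup $H$ generated by reflections in the faces of this doubled region has index $2k$ in $\Gamma_P$, and the interior endpoint guarantees $\widetilde\alpha$ does not close up in ${\bf H}^4/H$, so $\alpha\notin H$. Then $H'=H\cap\pi_1(M)$ gives the bound. Your sketch omits both the convexification and this doubling step, which are exactly the mechanisms that convert the volume estimate into a separating subgroup.
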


In the notation defined above, we then have the following corollary:

\begin{corollaryc}\label{cor:rfgrowth}
Let $(M, \rho)$ be a hyperbolic manifold admitting a totally geodesic immersion to a compact, right-angled Coxeter orbifold of dimension 3 or 4. Then, the geodesic residual finiteness growth is at most linear. That is to say, $\DM_{M, \rho} \preceq n$.
\end{corollaryc}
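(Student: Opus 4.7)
The plan is to observe that the corollary is essentially a direct consequence of Theorems 3.3 and 4.3 together with the definition of $\DM_{M,\rho}$ and of the preorder $\preceq$. The geometric and group-theoretic content has been absorbed into the two theorems, so the only remaining task is to repackage their explicit index bounds in the asymptotic language of growth functions.

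In detail, I would first let $C_P$ denote the constant appearing in whichever of the two theorems is applicable to $(M, \rho)$, namely
\[
C_P = \dfrac{2\pi}{V_P}\sinh^2(\ln(\sqrt{3} + \sqrt{2}) + d_P) \qquad \text{if } \dim \mathcal{O}_P = 3,
\]
and
\[
C_P = \dfrac{8\pi}{3V_P}\sinh^3(\ln(2 + \sqrt{3}) + d_P) \qquad \text{if } \dim \mathcal{O}_P = 4.
\]
For any nontrivial $\alpha \in \pi_1(M)$, Theorem 3.3 or Theorem 4.3 supplies a finite-index subgroup $H' \leq \pi_1(M)$ with $\alpha \notin H'$ and $[\pi_1(M):H'] \leq C_P\, \ell_\rho(\alpha)$, so by the definition of the divisibility function we get $D_{\pi_1(M)}(\alpha) \leq C_P\, \ell_\rho(\alpha)$.

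Next, I would take the maximum of this inequality over the set $\{\alpha \in \pi_1(M) - \{1\} : \ell_\rho(\alpha) \leq n\}$. Since the right-hand side $C_P\, \ell_\rho(\alpha)$ is monotone in $\ell_\rho(\alpha)$, this yields the pointwise bound $\DM_{M,\rho}(n) \leq C_P \cdot n$ for every $n \in {\bf N}$. Choosing $C = C_P$ (or any larger constant) in the definition $f(n) \leq C\cdot g(Cn)$ with $g(n) = n$ then gives exactly $\DM_{M, \rho} \preceq n$, which is the desired conclusion.

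Given that the hard analytic work on diameters, volumes, and separating hyperplanes has been carried out in the proofs of the two theorems, I do not expect a genuine obstacle here; the only thing to watch is the bookkeeping, ensuring the constant depends only on $P$ (and the dimension) and not on the element $\alpha$ or the length $n$, which is automatic because $d_P$ and $V_P$ are fixed once $P$ is fixed.
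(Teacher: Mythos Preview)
Your proposal is correct and matches the paper's approach: the paper simply states that this is an immediate corollary of Theorems~3.3 and~4.3 without giving a separate proof, and your argument is exactly the routine unpacking of that implication via the definitions of $D_{\pi_1(M)}$, $\DM_{M,\rho}$, and $\preceq$.
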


\begin{rem} We note that in the above theorems and corollary, the hyperbolic manifold $M$ need not be compact. In dimension 2, the compactness criteria of \cite[Theorem 5.4, Theorem 6.1, and Theorem 7.1]{PP12} is also not necessary. Indeed, every hyperbolic infinite area surface of finite type can be tiled by regular, right-angled pentagons and all proofs follow as in the compact case.

\end{rem}

\begin{rem} We also note that the dimension of the hyperbolic manifold $M$ need not match the dimension of the orbifold to which it admits a totally geodesic immersion. This is important since \cite[Lemma 4.6]{ALR} shows that there exist infinitely many compact, arithmetic hyperbolic 3--manifolds that admit a totally geodesic immersion to a particular compact reflection orbifold of dimension 4 (coming from the 120-cell in ${\bf H}^4$), but do not admit such an immersion to any reflection orbifold of dimension 3. 

\end{rem}

The next lemma allows us to extend Corollary \ref{cor:linear} to hyperbolic manifold groups that \emph{virtually} admit the desired type of immersion. A group $G$ (resp. topological space $X$) \emph{virtually} has property ``$\mathcal X$'' if there exists a  finite index subgroup of $G$ (resp. finite sheeted cover of $X$) with property ``$\mathcal X$".

\begin{lemmad}\label{lem:subgroups}
Let $(M, \rho)$ be a hyperbolic manifold and let $K \leq \pi_1(M)$ be a finite index subgroup with $[\pi_1(M) : K] = C$. Let $(M', \rho')$ be the cover of $M$ of degree $C$ corresponding to the subgroup $K$. Then the geodesic residual finiteness function for $(M,\rho)$ is bounded by that of $(M', \rho')$. That is to say, $\DM_{M, \rho} \leq C\cdot \DM_{M', \rho'}$ and hence $\DM_{M, \rho} \preceq \DM_{M', \rho'}$.
\end{lemmad}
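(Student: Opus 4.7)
My plan is to split into two cases depending on whether the element $\alpha$ to be separated already lies in the subgroup $K = \pi_1(M')$ or not. Fix $n \in \mathbf{N}$ and pick $\alpha \in \pi_1(M) \setminus \{1\}$ with $\ell_\rho(\alpha) \leq n$ achieving the maximum of $D_{\pi_1(M)}$; it suffices to exhibit a subgroup $H' \leq \pi_1(M)$ with $\alpha \notin H'$ and $[\pi_1(M) : H'] \leq C \cdot \DM_{M', \rho'}(n)$.

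In the first case, suppose $\alpha \notin K$. Then $H' = K$ itself separates $\alpha$ from the identity, and $[\pi_1(M) : H'] = C \leq C \cdot \DM_{M', \rho'}(n)$ (using that $\DM_{M', \rho'}(n) \geq 1$ whenever there is any nontrivial element of length at most $n$ in $\pi_1(M')$; if there is none, the first case is vacuous anyway since any $\alpha$ of length $\leq n$ in $M$ would then automatically lie outside $K$ and the bound $C$ already suffices).

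In the second case, suppose $\alpha \in K = \pi_1(M')$. The key geometric observation is that the covering map $p : (M', \rho') \to (M, \rho)$ is a local isometry by construction, so the closed geodesic representative of $\alpha$ in $M'$ projects isometrically to the closed geodesic representative of $\alpha$ in $M$. Hence $\ell_{\rho'}(\alpha) = \ell_\rho(\alpha) \leq n$. By definition of $\DM_{M', \rho'}$, there is a subgroup $H' \leq \pi_1(M')$ with $\alpha \notin H'$ and $[\pi_1(M') : H'] \leq \DM_{M', \rho'}(n)$. Viewing $H'$ as a subgroup of $\pi_1(M)$, we still have $\alpha \notin H'$, and the multiplicativity of indices gives
\[
[\pi_1(M) : H'] = [\pi_1(M) : K] \cdot [K : H'] \leq C \cdot \DM_{M', \rho'}(n).
\]

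Taking the maximum over all $\alpha$ with $\ell_\rho(\alpha) \leq n$ yields $\DM_{M, \rho}(n) \leq C \cdot \DM_{M', \rho'}(n)$, and the $\preceq$ statement follows immediately since $\DM_{M', \rho'}$ is non-decreasing (so $C \cdot \DM_{M', \rho'}(n) \leq C \cdot \DM_{M', \rho'}(Cn)$). The only real content is the geometric fact that a Riemannian covering preserves the length of closed geodesics in a conjugacy class — the algebraic bookkeeping of indices is straightforward — so I expect no substantive obstacle; the main point to make carefully is just the identification of a closed geodesic representative of $\alpha$ in $M'$ (as an element of $\pi_1(M')$) with its image in $M$.
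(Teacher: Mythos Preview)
Your proof is correct and follows essentially the same approach as the paper: both split on whether $\alpha$ lies in $K$, use $K$ itself as the separating subgroup when $\alpha\notin K$, and in the case $\alpha\in K$ use the equality $\ell_{\rho'}(\alpha)=\ell_\rho(\alpha)$ together with multiplicativity of indices. The paper packages the first step as an inequality $D_{\pi_1(M)}(\alpha)\le C\cdot D_K(\alpha)$ (with the convention $D_K(\alpha)=1$ when $\alpha\notin K$), but the underlying argument is identical to yours.
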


\noindent Combining Theorems \ref{thm:rfmain} and \ref{thm:rfmain2} with Lemma \ref{lem:boundextends} gives:

\begin{corollarye}\label{cor:virtual}
If $(M, \rho)$ is a hyperbolic manifold that virtually admits a totally geodesic immersion to a compact, right-angled Coxeter orbifold of dimension 3 or 4, then $\DM_{M, \rho} \preceq n$.
\end{corollarye}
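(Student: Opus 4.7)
The plan is to chain together the three results already stated in the excerpt, with the virtual hypothesis translated into passing to a finite cover. First, by the definition of ``virtually'' as given in the paper, the hypothesis means that there is a finite sheeted cover $(M', \rho') \to (M, \rho)$ of some degree $C$ such that $M'$ itself admits a totally geodesic immersion to a compact, right-angled Coxeter orbifold $\mathcal{O}_P$ of dimension 3 or 4. Note that $M'$ inherits a hyperbolic structure $\rho'$ as a cover of the hyperbolic manifold $M$, and the subgroup $K = \pi_1(M') \leq \pi_1(M)$ has index exactly $C$.

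Second, I would apply the appropriate one of Theorem 3.3 (in the 3-dimensional case) or Theorem 4.3 (in the 4-dimensional case) directly to $(M', \rho')$. Each of these theorems produces, for every nontrivial $\alpha' \in \pi_1(M')$, a finite index subgroup avoiding $\alpha'$ whose index is bounded by an explicit constant $C_1 = C_1(P)$ depending only on the diameter and volume of $P$, times $\ell_{\rho'}(\alpha')$. Consequently we obtain the linear bound
\[
\DM_{M', \rho'}(n) \leq C_1 \, n.
\]

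Third, I would invoke Lemma 5.1, which asserts that the geodesic residual finiteness function of $M$ is bounded by $C$ times that of the finite cover $M'$. Composing the two bounds gives
\[
\DM_{M, \rho}(n) \leq C \cdot \DM_{M', \rho'}(n) \leq C \, C_1 \, n,
\]
and so $\DM_{M, \rho} \preceq n$, completing the argument.

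There is essentially no obstacle in this proof beyond bookkeeping: all three ingredients are already in place, and the only nontrivial content is the observation that ``virtually admits an immersion'' unpacks to ``some finite cover admits an immersion,'' after which Theorem 3.3 or 4.3 applies verbatim to that cover and Lemma 5.1 transports the linear bound back down to $M$. The one subtlety worth a sentence of commentary is that the multiplicative constant from Lemma 5.1 depends on the degree of the cover (and hence implicitly on $M$), but this only affects the constant hidden in the relation $\preceq$, which is exactly what the $\preceq$ notation is designed to absorb.
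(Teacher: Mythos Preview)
Your proof is correct and matches the paper's own approach: the paper simply states that Corollary 4.4 (which packages Theorems 3.3 and 4.3 into the statement $\DM_{M',\rho'} \preceq n$) together with Lemma 5.1 immediately yields the result, without writing out a separate proof. Your writeup just makes the chain of implications explicit, which is fine.
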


We note that in \cite{BHP}, K. Bou-Rabee, M.F. Hagen, and the author give a quantification of residual finiteness for right-angled Artin groups (raAgs) in terms of word length. In particular, we prove that the residual finiteness growth of raAgs is at most linear. As suggested by Lemma\;\ref{lem:boundextends}, residual finiteness quantifications are essentially preserved under passing to subgroups and finite index extensions. Thus, the quantification for raAgs results in a quantification of residual finiteness for all groups that virtually embed in raAgs, which are called \emph{virtually special groups}. This class of groups includes Coxeter groups \cite{HaglundWiseCoxeter}, and in particular contains the class of manifold groups satisfying the hypothesis of Corollary \ref{cor:virtually}. By an application of Lemma \ref{lem:equivalent} below, the results of \cite{BHP}, therefore, imply Corollary \ref{cor:virtually}. However, the proofs of Theorems \ref{thm:rfmain}  and \ref{thm:rfmain2} do not rely on the canonical completion and retraction methods of \cite{haglundwise} for cube complexes. Instead, the proofs use simple calculations in hyperbolic space inspired by work of Agol--Long--Reid \cite{ALR} and Agol \cite{Agol}. Most importantly, the calculations lead to the \emph{explicit} bounds of Theorem \ref{thm:rfmain} and \ref{thm:rfmain2}, and obtaining such bounds using \cite{BHP} should require a significant amount of work.

\vspace{.25 in}

\noindent {\bf Acknowledgements. }{Much of the work in this paper originally appeared in the author's thesis, written under the supervision of Feng Luo, whom the author thanks for his help, support, and encouragement. The author would also like to sincerely thank Ian Agol for sharing his work and ideas, as well as for insightful conversations. Many thanks are also due to Tian Yang for his suggestions regarding the proofs of Lemmas \ref{lem:nottoofar} and \ref{lem:volume4}, to Alan Reid and Nicholas Miller for helpful conversations, to Ben McReynolds for his encouragement to write this paper, and to David Duncan for comments on an early draft. The author would especially like to thank an anonymous referee whose extensive and thorough suggestions have greatly increased the quality of this paper.}

\section{Preliminaries}
\subsection{General notation:}\label{sec:notation} Throughout the paper we switch freely between the Poincar\'{e} ball (${\bf D}^n$) and half-space (${\bf H}^n$) models of hyperbolic space. In the 3--dimensional Poincar\'{e} ball model ${\bf D}^3$, we define the three hyperplanes $L_x = \{(x, y, u) \in {\bf D}^3: x = 0 \}$, $L_y= \{(x, y, u) \in {\bf D}^3: y = 0 \}$, and $L_u = \{(x, y, u) \in {\bf D}^3: u = 0 \}$. In the 3--dimensional half-space model ${\bf H}^3$ we define the hyperplanes $L'_x = \{(x, y, u) \in {\bf H}^3: x = 0 \}$ and $L'_y= \{(x, y, u) \in {\bf H}^3: y = 0 \}$. Similarly, in ${\bf D}^4$ we define the four hyperplanes $L_{x_i}$ for $i = 1, 2, 3, 4$ as the hyperplanes obtained by restricting $x_i$ to be $0$. The hyperplanes $L'_{x_k}$ for $k = 1, 2, 3$ in ${\bf H}^4$ are defined by restricting the $x_k$ coordinate in ${\bf H}^4$ to be $0$.

\subsection{Topological view of residual finiteness:} When G is the fundamental group of a hyperbolic manifold, we have the following topological formulation of residual finiteness, which is used to prove Theorems \ref{thm:rfmain} and \ref{thm:rfmain2}: a hyperbolic manifold group $\pi_1(M)$ is residually finite if for every $\alpha \in \pi_1(M) - \{1\}$ there exists a finite index cover $\widetilde M$ of $M$ where the unique geodesic representative of $\alpha$ does not lift (i.e. no component of the preimage of $\alpha$ in $\widetilde{M}$ projects injectively to  $\alpha$).

\subsection{Polyhedral convexification:}

If $P$ is an all right polyhedron in ${\bf H}^n$, then by the Poincar\'{e} Polyhedron Theorem, the images of $P$ under the action of $\Gamma_P$ will tessellate ${\bf H}^n$.

\begin{definition} 
The \emph{$P$--convexification} of a connected set $\mathcal K$ in ${\bf H}^n$, denoted by $C_P(\mathcal K)$, is the smallest, convex union of polyhedra in the tessellation of ${\bf H}^n$ determined by $P$ that contains $\mathcal K$.
\end{definition}

\noindent Equivalently, we can define the convexification as the intersection of all half spaces bounded by the geodesic hyperplanes in our tessellation of ${\bf H}^n$ determined by $P$ that contain $\mathcal K$. \\

\section{The Proof of Theorem \ref{thm:rfmain}}\label{sec:3DRF}

Let $(M, \rho)$ be a hyperbolic manifold that admits a totally geodesic immersion, $f$, to a compact, right-angled Coxeter orbifold, $\mathcal{O}_P$, of dimension 3. Then the induced map $f_*: \pi_1(M) \longrightarrow \pi_1(\mathcal{O}_P) = \Gamma_P$ is injective, and $\pi_1(M)$ can be identified with a subgroup of $\pi_1(\mathcal{O}_P) = \Gamma_P$. When convenient, we blur the distinction between $\pi_1(M)$ and its image $f_*(\pi_1(M))$ in $\Gamma_P$ and simply write $\pi_1(M) < \Gamma_P$. Additionally, the length of the geodesic representative of $\alpha \in \pi_1(M) - \{1\}$ will be equal to the translation length of $f_*(\alpha) \in \Gamma_P$. 

For $\alpha' \in \pi_1(M) - \{1\}$ with $f_*(\alpha') = \alpha \in \Gamma_P$, the cyclic subgroup of $\Gamma_P$ generated by $\alpha$ will be denoted by $\Phi = \langle \alpha \rangle$. We let $\overline \alpha$ be the unique simple closed geodesic of $X = {{\bf H}^3}/\Phi$. 

The preimage of $\overline{\alpha}$ under the covering map ${\bf H}^3 \rightarrow X$ is the geodesic axis, $\widetilde Y$, in ${\bf H}^3$ for the element $\alpha$. We note that $\widetilde Y$ is invariant under the action of $\langle \alpha \rangle$ on ${\bf H}^3$, and therefore, so is $C_P(\widetilde Y)$, the $P$-convexification of $\widetilde{Y}$. The image of $C_P(\widetilde Y)$ in $X$ is denoted by $\mathcal C$. Thus, $\mathcal C \subset X$ is the smallest closed, connected, convex union of polyhedra in the tessellation of $X$ (determined by $P$) containing $\overline \alpha$. We refer to $\mathcal C$ as the \emph{convexification of $\overline \alpha$ in $X$}. The following lemma, along with Lemma \ref{lem:Rcalculation} in the next section, were first proposed by Agol via private communication \cite{Agol}.

\vspace{.1 in}

\begin{lemma} \label{lem:nottoofar}
Let $\mathcal C$ be the union of polyhedra forming the convexification of \,$\overline \alpha$ in $X = {\bf H}^3/\Phi$ via the procedure mentioned above. Then any polyhedron $P_i \in \mathcal C$ must intersect $N = N_R(\overline \alpha)$ where $R = \ln(\sqrt{3} + \sqrt{2})$.
\end{lemma}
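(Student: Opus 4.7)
The strategy is a contradiction argument exploiting the definition of $C_P(\widetilde{Y})$ as the intersection of all closed half-spaces (bounded by tessellation hyperplanes) that contain $\widetilde{Y}$. Suppose $P_i \in \mathcal{C}$ but $P_i \cap N_R(\overline{\alpha}) = \emptyset$; I will produce a tessellation hyperplane that strictly separates a lift $\widetilde{P}_i \subset C_P(\widetilde{Y})$ from $\widetilde{Y}$, giving the desired contradiction. Since $\widetilde{Y}$ and the tessellation are $\Phi$-invariant, this reduces to showing that some lift $\widetilde{P}_i$ satisfies $\widetilde{P}_i \cap N_R(\widetilde{Y}) \neq \emptyset$.

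The main geometric input is the following. At any vertex $v$ of $\widetilde{P}_i$, three tessellation hyperplanes $H_1, H_2, H_3$ meet at $v$ pairwise perpendicularly (since every dihedral angle of $P$ is $\pi/2$), and $\widetilde{P}_i$ lies locally in a single closed octant at $v$, say $H_1^+ \cap H_2^+ \cap H_3^+$. The hypothesis $\widetilde{P}_i \subset C_P(\widetilde{Y})$ forces, for each $j$, that $\widetilde{Y} \cap H_j^+ \neq \emptyset$; otherwise $H_j$ itself would be a separating tessellation hyperplane.

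Now work in the hyperboloid model $\mathbf{H}^3 \subset \mathbf{R}^{3,1}$ with $v = (1,0,0,0)$ and $H_j = \{x_j = 0\}$. Let $\gamma$ be the perpendicular geodesic from $v$ to its foot $y \in \widetilde{Y}$, put $D = d(v, y)$, and parameterize $\widetilde{Y}$ by arc length near $y$ as $\widetilde{Y}(s) = \cosh(s)\, y + \sinh(s)\, w$, where $w$ is a Minkowski-unit tangent vector perpendicular to both $y$ and $\gamma'(D)$. A direct computation of the $j$-th coordinate of $\widetilde{Y}(s)$ translates the conditions $\widetilde{Y}(s_j) \in H_j^+$ into explicit inequalities of the form $|w_j| \geq |a_j| \sinh D$, where $(a_1, a_2, a_3)$ are the components of the direction of $\gamma$ at $v$. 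Squaring and summing these over $j$, combined with the normalizations $\sum a_j^2 = \sum w_j^2 = 1$ and the orthogonality relation $\sum a_j w_j = 0$, yields $\sinh^2 D \leq 2$, i.e.\ $D \leq \ln(\sqrt{3} + \sqrt{2}) = R$, contradicting $D > R$.

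The main obstacle will be choosing the vertex $v$ and organizing the case analysis to produce exactly the constant $R = \ln(\sqrt{3} + \sqrt{2})$: the closest point of $\widetilde{P}_i$ to $\widetilde{Y}$ need not be a vertex, and one must handle the possibilities that it lies on an edge or in the relative interior of a face. In the face-interior case the hyperplane containing the face already strictly separates $\widetilde{P}_i$ from $\widetilde{Y}$ (the perpendicular from the closest point is normal to that hyperplane), so that case is eliminated; the edge case is handled by the two-perpendicular-hyperplane analogue of the computation above, and combining these with the vertex case and taking the largest resulting constant should produce $R = \ln(\sqrt{3} + \sqrt{2})$.
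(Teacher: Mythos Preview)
Your overall architecture matches the paper's exactly: assume a tile $\widetilde P_i\subset C_P(\widetilde Y)$ misses $N_R(\widetilde Y)$, let $e$ be the minimal face of $\widetilde P_i$ containing the closest point to $\widetilde Y$, and use the codimension--$1$ face hyperplanes through $e$ as candidate separators, with a case split on $\operatorname{codim}(e)\in\{1,2,3\}$. The paper carries out the vertex case in the ball model via spherical trigonometry (inscribed circle in a right spherical triangle), whereas you compute directly in the hyperboloid model; both are legitimate.

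There is, however, a concrete slip in your vertex computation. With $v=(1,0,0,0)$, $y=\cosh D\, v+\sinh D\,(0,a)$, and $w=(0,w_1,w_2,w_3)$ as you set it up, the condition that $\widetilde Y$ meets the open half-space $\{x_j>0\}$ (given $a_j\le 0$, which does hold since $v$ is the closest point) is exactly $|w_j|>|a_j|\sinh D$. Squaring and summing over $j$ already gives
\[
1=\sum_j w_j^2 \;>\; \sinh^2 D\sum_j a_j^2 \;=\; \sinh^2 D,
\]
so $\sinh D<1$ and $D<\ln(1+\sqrt 2)$, not $\sinh^2 D\le 2$. The orthogonality $\sum a_jw_j=0$ is used to obtain $w_0=0$ (hence $\sum w_j^2=1$), but it does not relax the bound to $2$. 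The same computation in the edge case ($a_3=0$, two hyperplanes) also gives $\sinh D<1$. So your method yields $R=\ln(1+\sqrt 2)$ in every case, not $\ln(\sqrt 3+\sqrt 2)$.

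This is not a gap in the proof of the lemma as stated---your bound is \emph{stronger} and the stated $R$ follows a fortiori---but it is worth understanding why your constant differs from the paper's. The paper does not track $\widetilde Y$ itself in the vertex case; it replaces $\widetilde Y$ by the entire perpendicular hyperplane $j$ through $y$ and asks when some $L_i$ separates $j$ (equivalently, when $\partial j$ misses some $\partial L_i$). That throws away the direction $w$ of $\widetilde Y$ inside $j$, and the worst case for the coarser problem is when the foot direction points to the incenter of the spherical octant, producing $\sinh D=\sqrt 2$. Your hyperboloid computation keeps the extra constraint coming from $w$, and this is exactly what improves the vertex threshold down to $\sinh D=1$.
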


\begin{proof}
We define $\widetilde N$ to be the preimage of $N$ in ${\bf D}^3$. Thus, $\widetilde N$ forms an $R$--neighborhood around the geodesic axis $\widetilde Y$ in ${\bf D}^3$. Suppose $P$ is a polyhedron in our tessellation of ${\bf D}^3$ which does not intersect $\widetilde N$. We aim to show that a hyperplane in ${\bf D}^3$ containing one of the codimension--1 faces of $P$ must separate $P$ from $\widetilde Y$, demonstrating that $P \notin \widetilde{\mathcal{C}} = C_P(\widetilde Y)$ and proving the lemma. 

We let $e$ be the minimal-dimensional face of $P$ containing its closest point to $\widetilde Y$ and set $k = $codim$(e)$. Note that $k$ is also the number of codimension--1 faces of $P$ that intersect $e$. Take a shortest geodesic $\overline{py}$ from $e$ to $\widetilde Y$ that intersects $\widetilde Y$ at a point $y$ and $e$ at a point $p$. Then $\overline{py}$ is an orthogeodesic with $\ell(\overline{py}) > R$ since $P$ does not intersect the $R$--neighborhood of $\widetilde Y$. We let $j$ be a hyperplane through $y$ that is perpendicular to $\overline{py}$, which separates $e$ from $\widetilde Y$. We note that $j$ necessarily contains $\widetilde Y$.

\vspace{.2 in}

{\bf Case $k = 3$:} We first consider the case where $e$ is a vertex of $P$ and $k = 3$. We begin by sending $e = p$ to the origin of ${\bf D}^3$ via isometries. Since $P$ is an all right polyhedron, the three codimension--1 faces of $P$ that intersect $e$ necessarily lie in three hyperplanes $L_1$, $L_2$ and $L_3$ that are the isometric image of $L_x$, $L_y$ and $L_u$, which were defined in Section~\ref{sec:notation}. Letting $\partial L_i = \partial {\bf D}^3 \cap L_i$, we see that the connected components of $\partial{{\bf D}^3} - \{\partial L_1\cup \partial L_2 \cup \partial L_3\}$ are right-angled spherical triangles in $\partial {\bf D}^3$ with edge lengths $\frac{\pi}{2}$. 

We note that we can assume that $\widetilde Y$ passes through the north pole of the hyperplane $j$ (this will be important for our calculation of $R$). We apply isometries to ${\bf D}^3$ until $y$, and therefore $\overline{py} = \overline{0y}$, lies on the line formed by $L_u \cap L_x$ and $\widetilde Y$ lies in $L_x$. The distance $d(P, \widetilde Y)$ and the radius, $r$, of $j \cap \partial {\bf D}^3 := \partial j$ are inversely proportional. Indeed, Figures \ref{fig:partialj} and \ref{fig:partialj2} show that as the distance between $P$ and $\widetilde Y$ (the length of $\overline{0y}$) grows, the radius of $\partial j$ becomes small.

\begin{figure}[h]
\centering
\subfloat[]{\label{fig:partialj}\includegraphics[trim = 1in .5in 0in .5in, clip=true, totalheight=0.2\textheight]{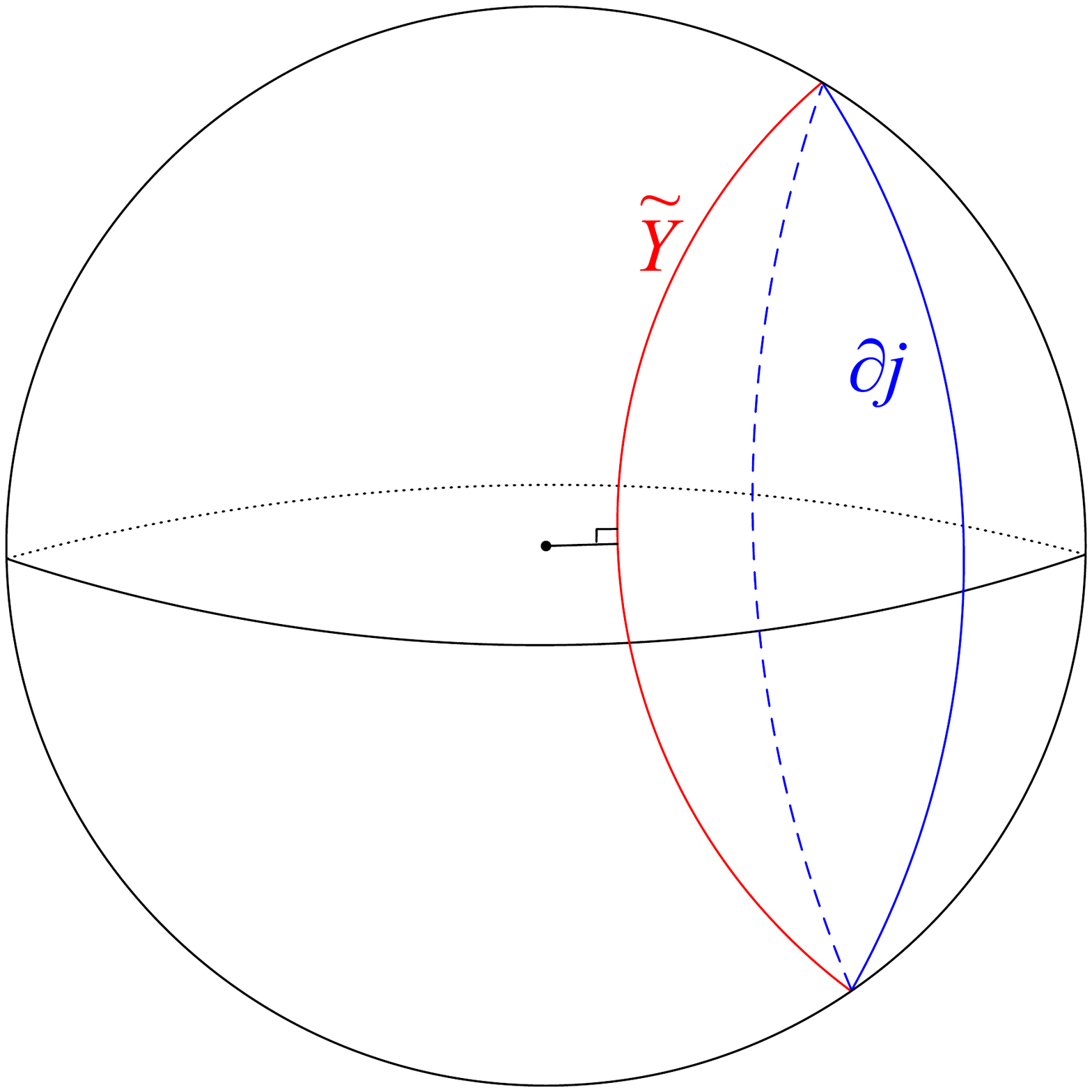}}
\subfloat[]{\label{fig:partialj2} \includegraphics[trim = 0in .5in 1in .5in, clip=true, totalheight=0.2\textheight]{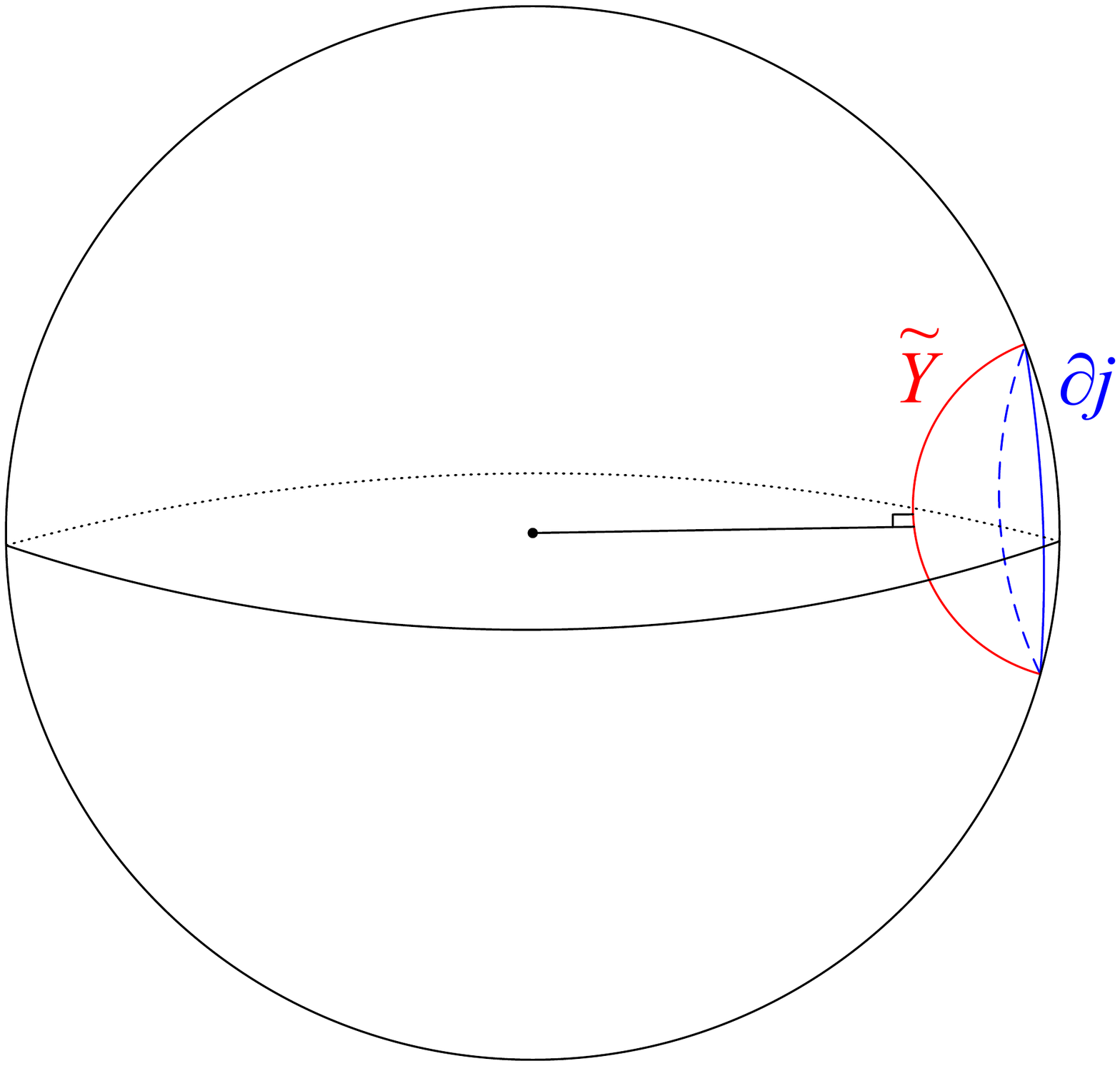}}
\caption{} 
\end{figure}

We are therefore interested in the threshold $R$ of the distance between $P$ and $\widetilde Y$ so that $\partial j$ can be inscribed in a right-angled spherical triangle formed by the boundaries of three pairwise orthogonal hyperplanes. Then if $d(P, \widetilde Y) > R$, at least one of the three circles $\partial L_i$ cannot intersect $\partial j$, and thus, one of the three hyperplanes $L_i$ must separate $P$ from $\widetilde Y$. 

We begin by calculating the radius of a circle inscribed in such a spherical triangle, whose edge lengths are $\frac{\pi}{2}$. This calculation follows from an application of classical formulas (see e.g. \cite[Section 89]{Todhunter}), but we also include it here. In Figure \ref{fig:inscribed}, $A$, $B$ and $C$ are the midpoints of the three edges in our spherical triangle, which are also the points of tangency for the inscribed circle. Since the triangle is formed by the intersection of three pairwise orthogonal hyperplanes in ${\bf D}^3$ with $\partial {\bf D}^3$, the unit sphere in ${\bf R}^3$, the length of the edge $\overline{DE}$ is equal to $\frac{\pi}{2}$, and the length of $\overline{DB}$ is $\frac{\pi}{4}$. 

\begin{figure}[h]
\centering
\subfloat[]{\label{fig:inscribed}\includegraphics[trim = 1.5in .5in 1in .5in, clip=true, totalheight=0.25\textheight]{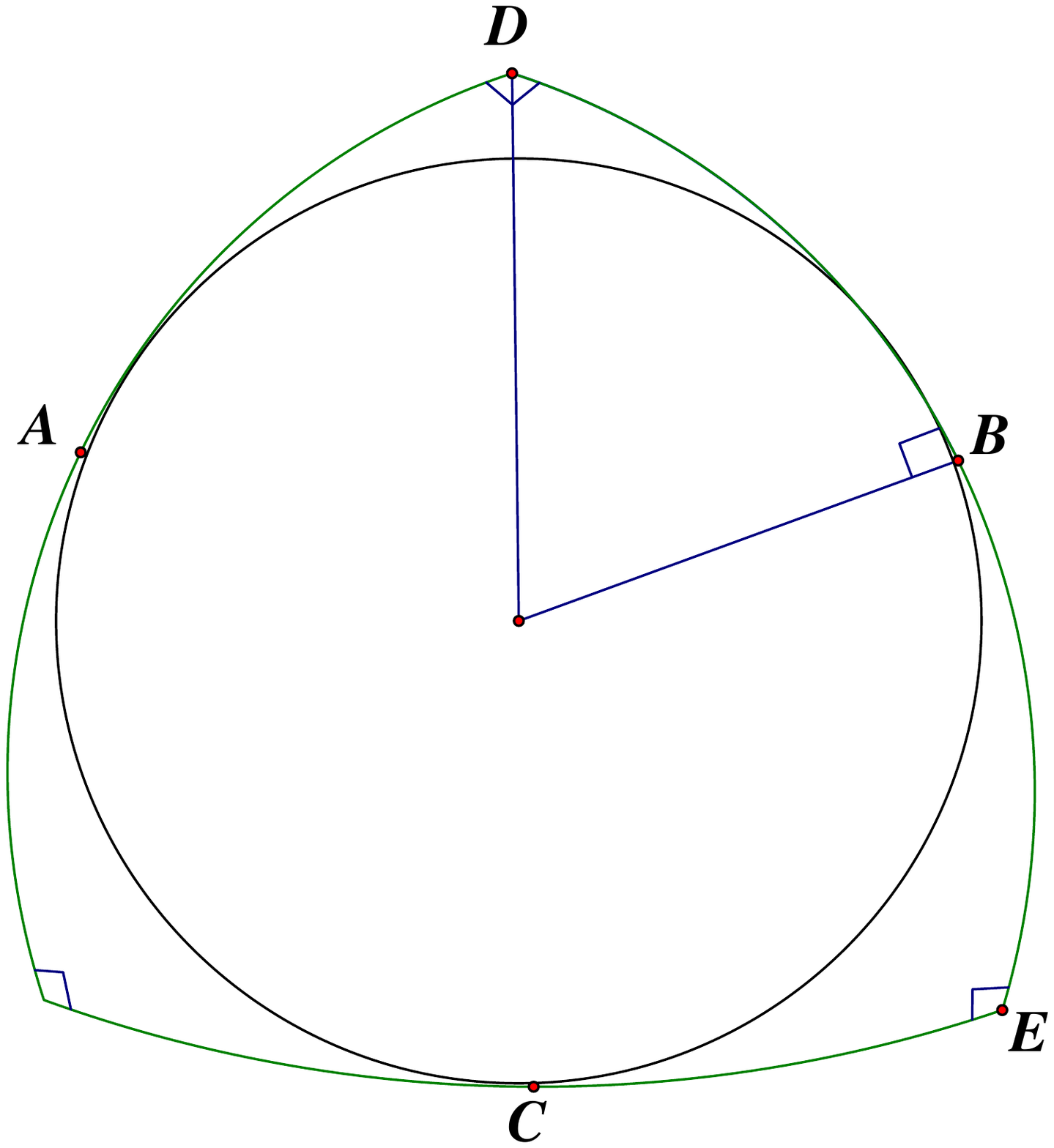}}
\subfloat[]{\label{fig:inscribed2} \includegraphics[trim = 0in .5in 1in .5in, clip=true, totalheight=0.23\textheight]{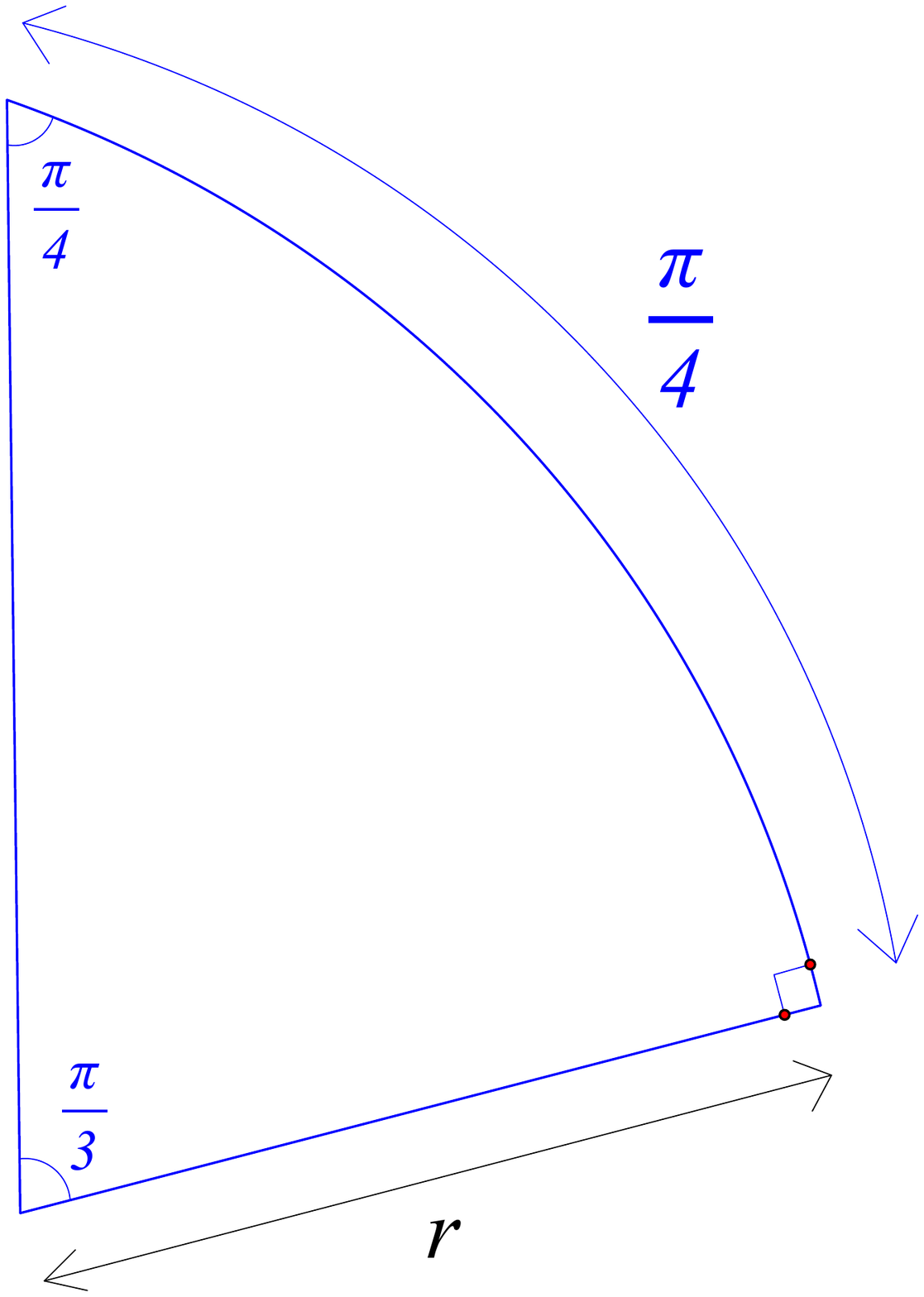}}
\caption{} 
\end{figure}

To calculate the radius $r$ of the inscribed circle we apply the following Spherical Law of Cosines. Let $T$ be a spherical triangle with angles $\alpha$, $\beta$ and $\gamma$, and with edges of lengths $a$, $b$ and $c$ opposite the angles $\alpha$, $\beta$ and $\gamma$, respectively. Then,

\[\cos \alpha = - \cos \beta \cos \gamma + \sin \beta \sin \gamma \cos a.\]

\noindent For the triangle in Figure \ref{fig:inscribed2}, we have $\cos \frac{\pi}{4} = \sin \frac{\pi}{2} \sin \frac{\pi}{3} \cos r$, and therefore, $r = \cos^{-1} \left(\frac{\sqrt{2}}{\sqrt{3}}\right)$.

Now we calculate the distance $R =$ length of $\overline{0y} = d(P, \widetilde Y)$ so that the radius $r$ of $\partial j$ is $\cos^{-1} \left(\frac{\sqrt{2}}{\sqrt{3}}\right)$. Consider the cross sectional view formed by the intersection of the hyperplane $L_x$ with our setup in Figures \ref{fig:partialj}, \ref{fig:partialj2} above. This view is represented in Figure \ref{fig:crosssec}. 

\begin{figure}[h]
\centering
{\includegraphics[trim = 1in .5in 1in .5in, clip=true, totalheight=0.25\textheight]{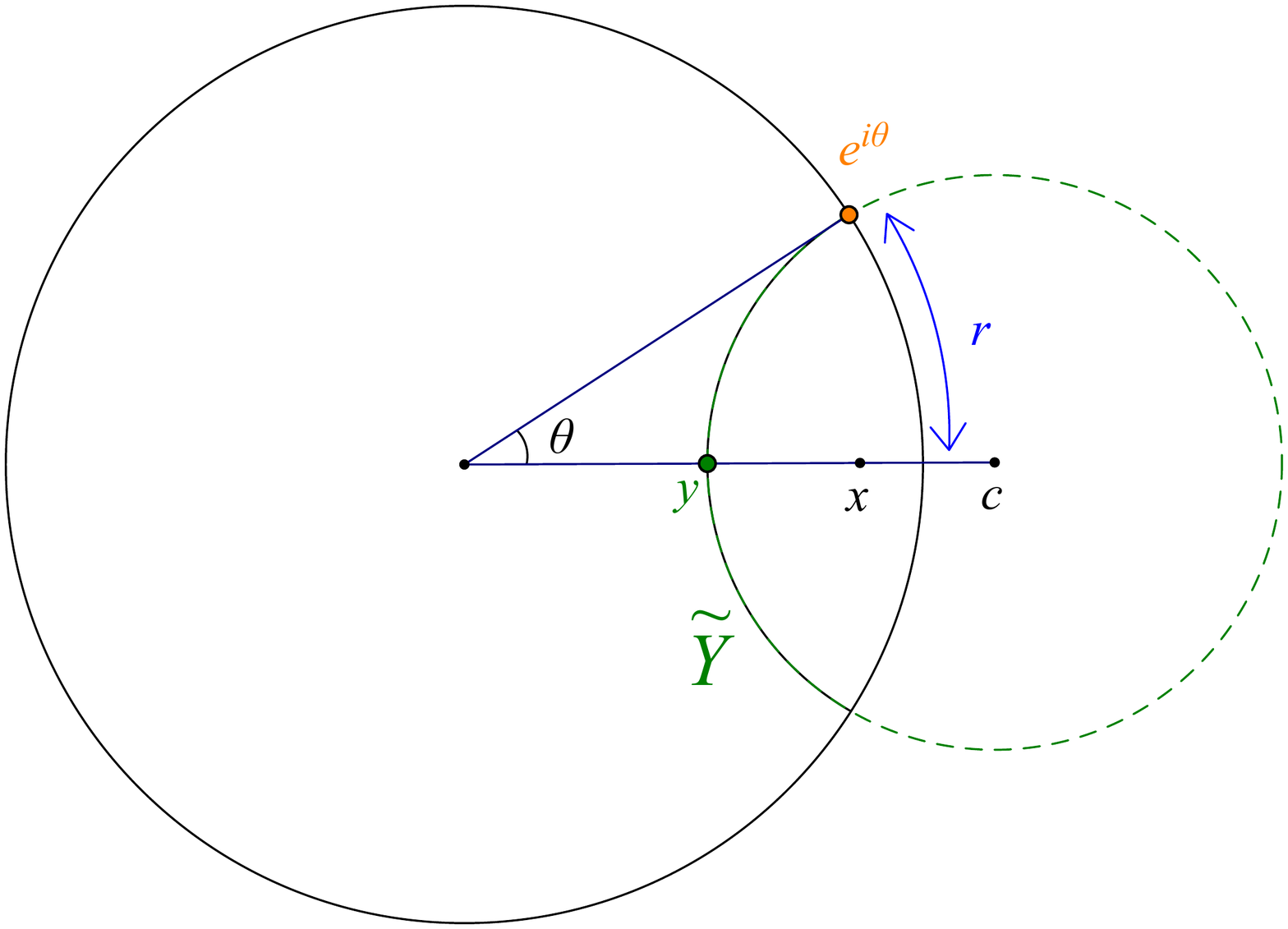}}
\caption{} 
\label{fig:crosssec}
\end{figure}

Since $\partial {\bf D}^3 \cap \partial L_x$ is a unit circle we know that $\theta = r = \cos^{-1} \left(\frac{\sqrt{2}}{\sqrt{3}}\right)$. Note that the points $0, e^{i\theta}, \text{and } c$ form a right triangle and we denote the length of the line segment from $e^{i\theta}$ to $c$ by $q$. Therefore, $c = \sec(\theta)= \frac{\sqrt{3}}{\sqrt{2}}$ and $q= \tan(\theta) = \frac{1}{\sqrt{2}}$. Lastly, $y = c - q = \frac{\sqrt{3} - 1}{\sqrt{2}}$ and $R = d(0, \widetilde Y) = \ln\left(\frac{1 + y}{1 - y}\right)$, which by a simple calculation gives us $R = \ln(\sqrt{2} + \sqrt{3})$.

\vspace{.2 in}

{\bf Case $k = 2$:} The case where $e$ is an edge of $P$ can be handled in a similar way. In this case, we show that $R = \ln(\sqrt{2} + 1)$.

We assume the same setup as in the previous case where the point $p$ on the edge $e$ that is closest to $\widetilde Y$ is at the origin of ${\bf D}^3$ and $y$ lying on $L_u \cap L_x$. The extensions of the two codimension--1 faces of $P$ that intersect $e$ form a pair of orthogonal hyperplanes, $L_1$ and $L_2$, in ${\bf D}^3$. Their boundaries, $\partial L_1$ and $\partial L_2$, form a spherical bi-disk with angles $\frac{\pi}{2}$. We are looking for the threshold $R$ such that $\partial j$, and thus $\widetilde Y$, is tangent to such a spherical bi-disk at the endpoints of $\widetilde Y$. Then, if $d(0, \widetilde Y) > R$, one of the $\partial L_i$ cannot intersect $\partial j$, and one of the hyperplanes $L_i$ must therefore separate $P$ from $\widetilde Y$. 

A cross sectional view of this situation is shown in Figure \ref{fig:case2} below. Given the triangle in the figure, we know that $c = \sqrt{2}$ so that $y = \sqrt{2} - 1$. Thus, $R = \ln \left(\frac{1 + \sqrt{2} - 1}{1 - \sqrt{2} +1}\right) = \ln(\sqrt{2} + 1)$.

\begin{figure}[h]
\centering
\includegraphics[trim = 1in 1.5in 1in 1.5in, clip=true, totalheight=0.18\textheight]{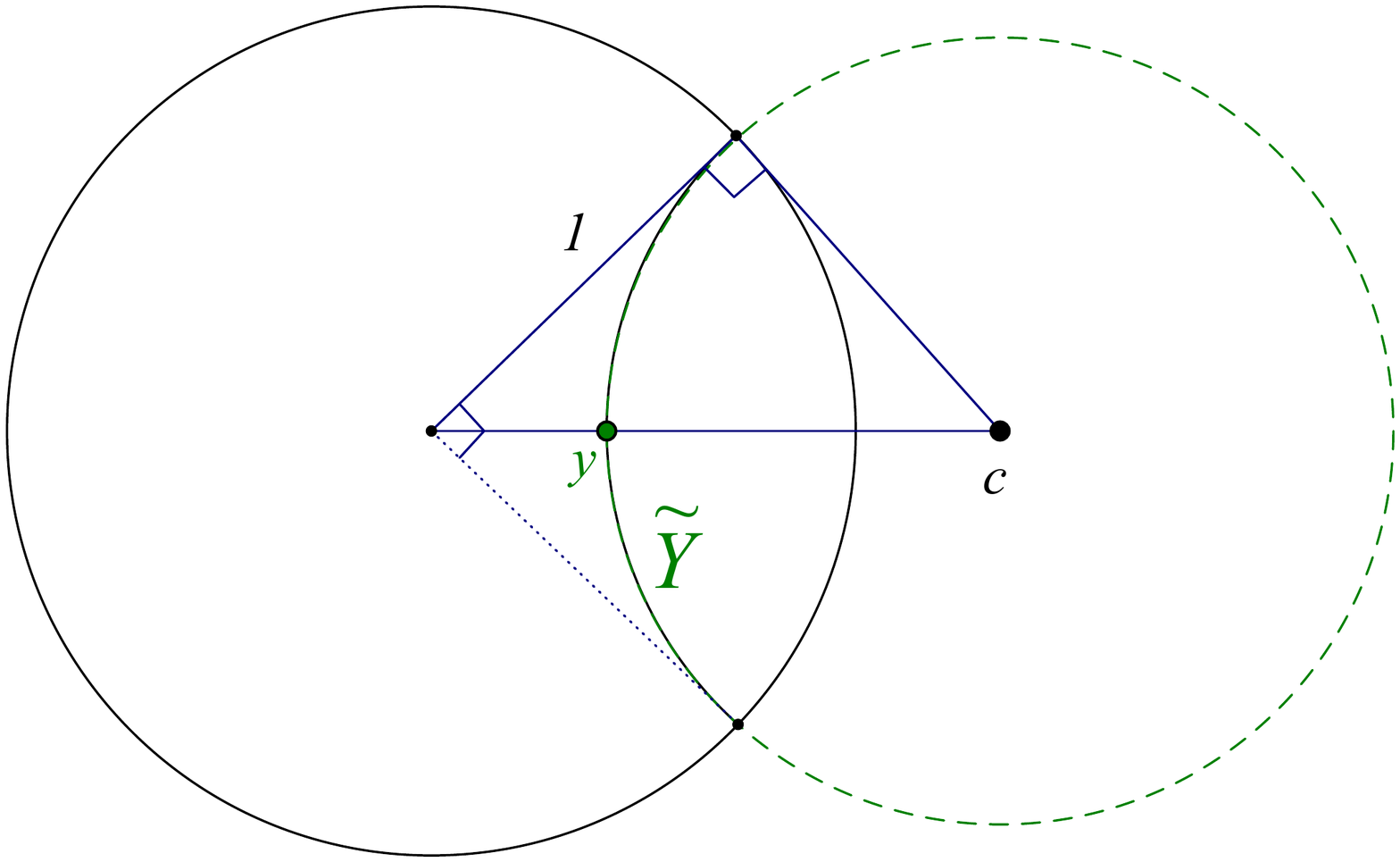}
\caption{}
\label{fig:case2}
\end{figure}

\vspace{.2 in}

{\bf Case $k = 1$:} Lastly, we consider the case where $e$ is a codimension--1 face of $P$, that is the case where $k = 1$. Then $\overline{py}$ is an orthogeodesic between the hyperplane containing $e$, which we also call $e$ for notational simplicity, and the hyperplane $j$. Taking any $R>0$  is sufficient in this case since $e$ itself is a codimension--1 face of $P$ whose hyperplane extension separates $P$ from $\widetilde Y$ (see Figure \ref{figure:face} below).

\begin{figure}[h]
\centering
\includegraphics[trim = .75in 2in .75in 1.7in, clip=true, totalheight=0.18\textheight]{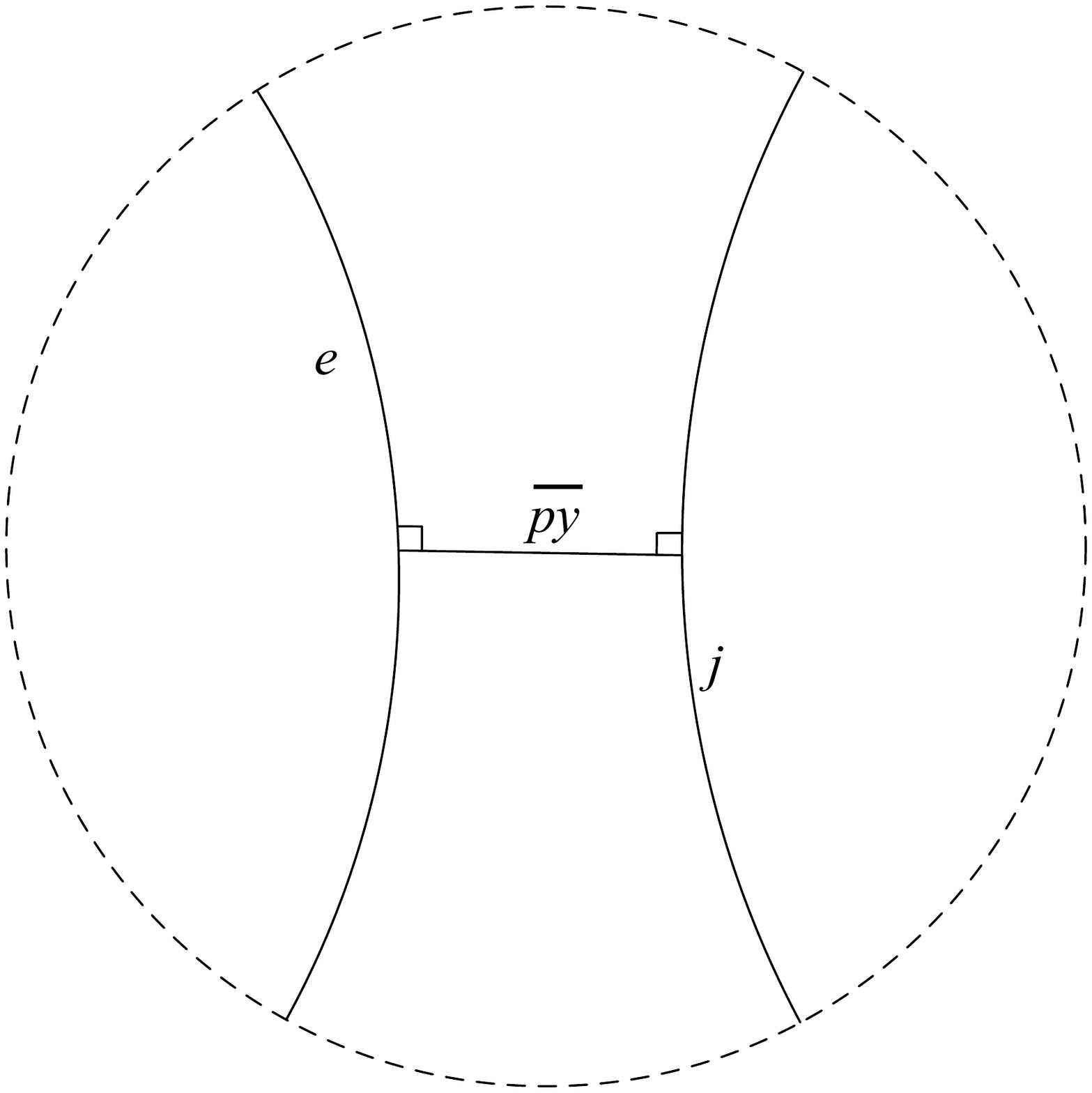}
\caption{}
\label{figure:face}
\end{figure}

We take $R$ to be the largest of the values in the three cases, i.e. $R = \ln(\sqrt{2} + \sqrt{3})$. If $P$ does not intersect $\widetilde N = N_R(\widetilde Y)$, then there is a codimension--1 face of $P$ whose hyperplane extension separates $P$ from $\widetilde Y$ so that $P \notin \widetilde{\mathcal C}$. Therefore, any polyhedron $P_i$ in $X = {{\bf H}^3}/\langle \alpha \rangle$ that is in the convexification $\mathcal C$ of $\overline \alpha$ must intersect the $\ln(\sqrt{2} + \sqrt{3})$--neighborhood $N$ of $\overline \alpha$.

\end{proof}

Given the above lemma, we have that $\mathcal{C} \subset N_{R+d_P}(\overline \alpha)$, where $d_P$ is the diameter of $P$. The following lemma allows us to calculate the volume of $N_{R+d_P}(\overline \alpha)$. Again, this calculation is standard (e.g. it is stated without proof in \cite{Meyerhoff}), but it is included here to set the stage for the proof of Theorem \ref{thm:rfmain}.  
\vspace{.1 in}

\begin{lemma}\label{lem:volume3}
We let $\Omega$ be the solid tubular $b$--neighborhood of the geodesic segment in ${{\bf H}^3}$ between the points $(0, 0, R_0)$ and $(0, 0, r_0)$ as shown in Figure \ref{fig:tubular3}. Then \emph{Vol}$(\Omega) = \pi \sinh^2(b) \, \ell$, where $\ell = \ln(R_0/r_0)$ is the length of the geodesic between the points $(0, 0, R_0)$ and $(0, 0, r_0)$ in ${{\bf H}^3}$.

\begin{figure}[h]
\centering
 \includegraphics[trim = 2in 1in 2in .75in, clip=true, totalheight=0.22\textheight]{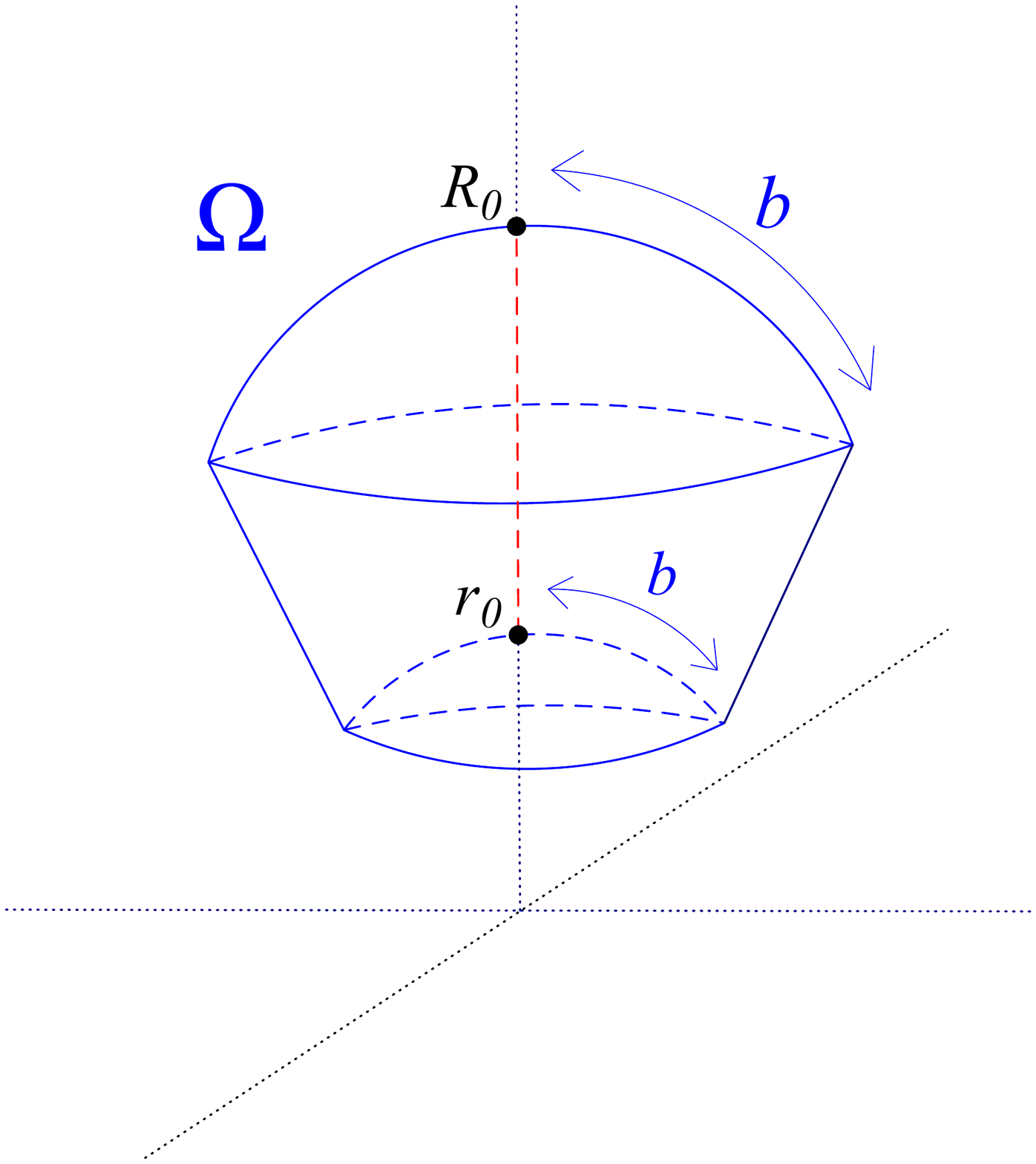}
\caption{}
\label{fig:tubular3}
\end{figure}

\end{lemma}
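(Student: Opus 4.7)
The plan is to work directly in the upper half-space model ${\bf H}^3$ with the geodesic segment realized as the vertical interval on the $u$-axis between $(0,0,r_0)$ and $(0,0,R_0)$, and to compute $\mathrm{Vol}(\Omega)$ as an iterated integral in Euclidean cylindrical coordinates $(\rho,\phi,u)$.

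The first (and really the only nontrivial) step is a geometric identification: the hyperbolic $b$-tube around the vertical geodesic in the half-space model is a Euclidean cone. For a point $(\rho,\phi,u)$, I would establish
\[
\sinh\bigl(d((\rho,\phi,u),\text{$u$-axis})\bigr) \;=\; \frac{\rho}{u}
\]
by passing to the vertical half-plane through the point (an isometrically embedded ${\bf H}^2$), projecting orthogonally onto the $u$-axis along the unique Euclidean semicircle centered at the origin through the point, and integrating the hyperbolic arclength form $\csc\theta\,d\theta$ along this semicircle. As an immediate consequence, the tube $\Omega$ coincides with the truncated Euclidean solid cone
\[
\Omega \;=\; \{(\rho,\phi,u) : r_0 \le u \le R_0,\ \phi \in [0,2\pi),\ 0 \le \rho \le u\sinh(b)\}.
\]

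Given this description, the computation is a one-line integration against the hyperbolic volume form $dV = u^{-3}\,dx\,dy\,du = u^{-3}\rho\,d\rho\,d\phi\,du$:
\[
\mathrm{Vol}(\Omega) \;=\; \int_{r_0}^{R_0}\!\!\int_0^{2\pi}\!\!\int_0^{u\sinh(b)} \frac{\rho}{u^3}\,d\rho\,d\phi\,du \;=\; 2\pi \int_{r_0}^{R_0} \frac{\sinh^2(b)}{2u}\,du \;=\; \pi\sinh^2(b)\,\ln(R_0/r_0),
\]
which equals $\pi\sinh^2(b)\,\ell$, as required.

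The main obstacle is the initial identification of the equidistant locus as a Euclidean cone; everything afterward is routine. An equivalent route, which avoids the half-space calculation, is to use Fermi coordinates $(t,r,\theta)$ along the geodesic: the metric takes the form $ds^2 = \cosh^2(r)\,dt^2 + dr^2 + \sinh^2(r)\,d\theta^2$, yielding the volume element $\cosh(r)\sinh(r)\,dt\,dr\,d\theta$. Integrating over $t\in[0,\ell]$, $r\in[0,b]$, $\theta\in[0,2\pi)$ and applying $\int_0^b \cosh(r)\sinh(r)\,dr = \tfrac{1}{2}\sinh^2(b)$ produces the same formula; I would present whichever version best matches the figure $\ref{fig:tubular3}$ in the paper.
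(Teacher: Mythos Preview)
Your approach is essentially the paper's: integrate the hyperbolic volume form over the Euclidean cone in the upper half-space model that realises the $b$-tube about the vertical axis. The paper does this in spherical coordinates $(r,\phi,\theta)$, invoking the angle-of-parallelism identity $\cot\pi(b)=\sinh b$ to get the $\phi$-range $[0,\tfrac{\pi}{2}-\pi(b)]$; your cylindrical-coordinate version with $\sinh d = \rho/u$ is the same computation in different clothing, and your Fermi-coordinate alternative is a clean equivalent.

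One small correction: your set $\{r_0\le u\le R_0,\ \rho\le u\sinh b\}$ is bounded above and below by \emph{horospheres}, whereas the region $\Omega$ in the figure (and in the paper's integration, where the radial bound is $r_0\le r\le R_0$) is bounded by the totally geodesic hemispheres orthogonal to the axis at its endpoints. These are genuinely different subsets of ${\bf H}^3$, so your sentence ``the tube $\Omega$ coincides with the truncated Euclidean solid cone $\{r_0\le u\le R_0,\dots\}$'' is not literally true. The volumes agree because both regions are fundamental domains for the loxodromic $z\mapsto (R_0/r_0)\,z$ acting on the full tube $\{\rho\le u\sinh b\}$, so your final answer is correct; just replace the horosphere cutoffs by the spherical shells $r_0\le \sqrt{\rho^2+u^2}\le R_0$ (or switch to spherical coordinates for that step) if you want the region to match $\Omega$ exactly.
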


\begin{proof}
For this volume calculation we find it convenient to use spherical coordinates. The volume form on ${{\bf H}^3}$, $\frac{dx \wedge dy \wedge du}{u^3}$, becomes $\frac{1}{r} \tan \phi \sec^2 \phi \, dr \wedge d\phi \wedge d\theta$. Let $\pi(b)$ be the angle in $L'_y$ from the positive $x$-axis to the Euclidean ray consisting of points at hyperbolic distance $b$ from the $u$-axis, so that the range of values for $\phi$ in $\Omega$ is then $[\, 0, \, \pi/2 - \pi(b)\, ]$. Therefore,

\vspace{.1 in}

\begin{center} $\text{Vol}(\Omega) = \displaystyle\iiint \limits_{\Omega} \frac{1}{r} \, \, \tan \phi \, \, \sec^2 \phi \, \, dr \wedge d\phi \wedge d\theta \; \;$

\vspace{.1 in}

 $=\displaystyle\int_{0}^{2\pi} \displaystyle\int_{0}^{\frac{\pi}{2} - \pi(b)}\displaystyle\int_{r_0}^{R_0} \frac{1}{r} \tan \phi \, \,  \sec^2 \phi \, \, dr\, d\phi \, d\theta \; \; $
 
\vspace{.1 in}
 
$=\ln\left(\frac{R_0}{r_0}\right)  \displaystyle\int_{0}^{2\pi} \displaystyle\int_{0}^{\frac{\pi}{2} - \pi(b)}\tan \phi \,\,  \sec^2 \phi \, \, \, d\phi \, d\theta \; \; = \; \;  \ell \left[ \frac{\tan^2\phi}{2} \bigg|_{0}^{\frac{\pi}{2} - \pi(b)}\right]  \displaystyle\int_{0}^{2\pi} \, d\theta  \; \; $

\vspace{.1 in}

$=\frac{\pi}{\tan^2 \pi(b)} \, \ell \; \; = \; \; \pi \sinh^2(b) \, \ell$, 

\end{center}

\vspace{.1 in}

\noindent with the last equality coming from the angle of parallelism laws in hyperbolic space (see \cite[Section 7.9]{Beardon}).

\end{proof}

We take a subset of the preimage of $N_{R+d_P}(\overline \alpha)$ in ${{\bf H}^3}$ that is isometric to $N_{R+d_P}(\overline{\alpha})$ and which forms a region like $\Omega$ from the previous lemma, where $b = R + d_P$. Lemma \ref{lem:volume3} then implies that Vol$(\mathcal C) < $ Vol$(N_{R+d_P}(\overline \alpha)) = \pi \sinh^2(R + d_P) \, \ell_\rho(\alpha)$. Thus, we have the following theorem:

\begin{theorem}\label{thm:rfmain}
Let $(M, \rho)$ be a hyperbolic manifold that admits a totally geodesic immersion to a compact, right-angled Coxeter orbifold, $\mathcal{O}_P$, of dimension 3. Then for any $\alpha \in \pi_1(M)-\{1\}$, there exists a subgroup $H'$ of $\pi_1(M)$ such that $\alpha \notin H'$, and index of $H'$ is bounded above by \[ \dfrac{2\pi}{V_P}\sinh^2(\ln(\sqrt{3} + \sqrt{2}) + d_P) \,\ell_\rho(\alpha),\] where $\ell_\rho(\alpha)$ is the length of the unique geodesic representative of $\alpha$, and where $d_P$ and $V_P$ are the diameter and volume of $P$, respectively.

\end{theorem}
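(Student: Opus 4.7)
I would approach the theorem via the topological formulation of residual finiteness given in Section~\ref{sec:notation}: producing the desired subgroup $H' \le \pi_1(M)$ with $\alpha \notin H'$ amounts to producing a finite cover $\widetilde M \to M$ in which the unique geodesic representative $\overline\alpha$ does not lift. Identifying $\pi_1(M)$ with its image $f_*(\pi_1(M)) \le \Gamma_P$ under the totally geodesic immersion, it suffices to build a finite index subgroup $K \le \Gamma_P$ not containing $\alpha$, and then take $H' = K \cap \pi_1(M)$, so that $[\pi_1(M):H'] \le [\Gamma_P:K]$.

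The geometric input has been packaged by the preceding lemmas. Lemma~\ref{lem:nottoofar} guarantees that every $P$-tile of the convexification $\mathcal{C} = \widetilde{\mathcal{C}}/\langle\alpha\rangle$ of $\overline\alpha$ in $X$ meets $N_R(\overline\alpha)$ with $R = \ln(\sqrt{3}+\sqrt{2})$; since each tile has diameter at most $d_P$, this forces $\mathcal{C} \subset N_{R+d_P}(\overline\alpha)$. Applying Lemma~\ref{lem:volume3} to an isometric copy of the tubular region (with $b = R+d_P$ and $\ell = \ell_\rho(\alpha)$) yields
\[
\mathrm{Vol}(\mathcal{C}) \le \mathrm{Vol}\bigl(N_{R+d_P}(\overline\alpha)\bigr) = \pi\sinh^2(R+d_P)\,\ell_\rho(\alpha),
\]
so, because the $P$-tiles in $\mathcal{C}$ are pairwise disjoint on interiors and each has volume $V_P$, the number $n$ of $P$-tiles in $\mathcal{C}$ is at most $\tfrac{\pi}{V_P}\sinh^2(R+d_P)\,\ell_\rho(\alpha)$.

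The remaining and principal step is to convert this tile count into a subgroup $K \le \Gamma_P$ with $\alpha \notin K$ and $[\Gamma_P:K] \le 2n$. Following the observation of Agol~\cite{Agol} and the method of \cite{ALR}, I would exploit the right-angled structure crucially: because $P$ is all right, the boundary hyperplanes of $\widetilde{\mathcal{C}}$ meet orthogonally wherever two boundary faces share an edge, so the reflections in those boundary hyperplanes generate a right-angled reflection subgroup of $\Gamma_P$ with $\widetilde{\mathcal{C}}$ as a fundamental domain. Combined with the $\langle\alpha\rangle$-action that permutes these boundary hyperplanes, this organizes $\mathcal{C}$ into a finite orbifold cover of $\mathcal{O}_P$ whose degree is controlled by the tile count $n$, with an extra factor of two arising from passing to a manifold/torsion-free cover in which $\overline\alpha$ unwraps into an open arc rather than closing up.

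I expect the technically delicate part to be the last paragraph: verifying both that the constructed $K$ omits $\alpha$ (equivalently, that the projected axis of $\alpha$ is not a closed geodesic of length $\ell_\rho(\alpha)$ in $\mathbf{H}^3/K$) and that the index bound $[\Gamma_P:K] \le 2n$ is attained with no further multiplicative loss beyond the factor of two. Once these are in hand, the volume bound on $n$ substitutes in to give $[\pi_1(M):H'] \le [\Gamma_P:K] \le 2n \le \tfrac{2\pi}{V_P}\sinh^2\bigl(\ln(\sqrt{3}+\sqrt{2})+d_P\bigr)\,\ell_\rho(\alpha)$, which is the stated bound.
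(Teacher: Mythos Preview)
Your overall architecture is right: reduce to finding $K \le \Gamma_P$ with $\alpha \notin K$ and $[\Gamma_P:K] \le 2n$, where $n$ is the tile count bounded via Lemmas~\ref{lem:nottoofar} and~\ref{lem:volume3}, and then set $H' = K \cap \pi_1(M)$. But your construction of $K$ is not what the paper does, and as you describe it, it does not work.

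You propose to combine the reflection group in the boundary hyperplanes of $\widetilde{\mathcal C}$ with the $\langle\alpha\rangle$-action. The group so generated has $\mathcal C$ as fundamental domain and hence index $n$ in $\Gamma_P$, but it \emph{contains} $\alpha$ by construction, so it cannot serve as $K$. Your factor of $2$ is then supposed to come from ``passing to a manifold/torsion-free cover in which $\overline\alpha$ unwraps'', but no such canonical index-$2$ subgroup is available here: the reflections have order $2$, so no index-$2$ subgroup is torsion-free, and there is no evident homomorphism to ${\bf Z}/2$ killing all the boundary reflections while sending $\alpha$ to the nontrivial element.

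The paper's construction avoids $\langle\alpha\rangle$ entirely. One chooses a compact fundamental domain $\overline{\mathcal C} \subset C_P(\widetilde Y)$ for the $\langle\alpha\rangle$-action; this is a convex union of $n$ copies of $P$ containing a single lift $\widetilde\alpha$ of $\overline\alpha$, with both endpoints of $\widetilde\alpha$ on $\partial\overline{\mathcal C}$. One then \emph{doubles}: take the adjacent lift $\overline{\mathcal C}_1$ sharing an endpoint with $\widetilde\alpha$ and set $\mathcal C' = \overline{\mathcal C}\cup\overline{\mathcal C}_1$, a convex union of $2n$ tiles in which one endpoint of $\widetilde\alpha$ now lies in the interior. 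Let $H$ be the group generated by reflections in the codimension--$1$ faces of $\mathcal C'$. The Poincar\'e Polyhedron Theorem gives $[\Gamma_P:H] = 2n$, and since the covering map ${\bf H}^3 \to {\bf H}^3/H$ is injective on $\mathrm{int}(\mathcal C')$, the image of $\widetilde\alpha$ is not a loop, whence $\alpha\notin H$. So the factor of $2$ is the doubling of the fundamental domain to push an endpoint into the interior, not a passage to a torsion-free cover.
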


\begin{proof}
We know that Vol$(\mathcal C) < \pi \sinh^2(R + d_P) \, \ell_\rho(\alpha)$ so if $\mathcal C$ consists of $k$ polyhedra, \[k < \dfrac{\pi}{V_P}\sinh^2(\ln(\sqrt{3} + \sqrt{2}) + d_P) \,\ell_\rho(\alpha).\]

Let $\widetilde \alpha$ be one lift of $\overline \alpha$ to ${\bf H}^3$. By a lift of $\overline{\alpha}$, we mean a lift of the path $\overline{\alpha}: [0,1] \longrightarrow X$ to a path $\widetilde{\alpha}: [0,1] \longrightarrow {\bf H^3}$ starting at the basepoint $\widetilde{\alpha}(0) = x$. Using the right-angled tiling of $X$, we can lift $\mathcal C$ to ${\bf H}^3$ so that the result is a connected, convex union of $k$ polyhedra in $C_P(\widetilde Y)$ denoted by $\overline{\mathcal C}$, which contains the geodesic segment $\widetilde \alpha$. The convexity of the set is crucial since we will want to apply the Poincar\'{e} Polyhedron Theorem to prove the result above.

Set $\widetilde \alpha_1$ to be one of the two lifts of $\overline \alpha$ that share endpoints with $\widetilde \alpha$. If $\overline {\mathcal{C}_1}$ is the associated convex lift of $\mathcal C$ containing $\widetilde \alpha_1$, then $\mathcal C' = \overline{\mathcal C} \cup \overline{\mathcal{C}_1}$ is a convex union of $2k$ polyhedra in ${\bf H}^3$, such that one endpoint of $\widetilde \alpha$ is contained in the interior of $\mathcal C'$.

Denote by $H$ the group of isometries of ${\bf H}^3$ generated by reflections in the sides of $\mathcal C'$. Then $H < \Gamma_P$, and $\mathcal C'$ is a fundamental domain for the action of $H$ on ${\bf H}^3$ by the Poincar\'{e} Polyhedron Theorem. Since $\mathcal C'$ contains $2k$ polyhedra, $[\Gamma_P\!:\!H] = 2k$. Letting $p: {\bf H}^3 \longrightarrow {\bf H}^3/H$ be the covering map, we then have that the restriction of $p$ to the interior of ${\mathcal C'}$ is a homeomorphism onto its image in ${\bf H}^3/H$. Thus, $p(\widetilde \alpha)$ is not a loop in ${\bf H}^3/H$, and $\alpha \notin H$.

Now, if $H' = H \cap \pi_1(M)$, then $\alpha \notin H'$ and $[\pi_1(M)\!:\!H'] \leq [\Gamma_P\!:\!H] = 2k$. The result follows. Note that we have used both the fact that $\pi_1(M) < \Gamma_P$ and that the length of $\alpha$ is equal to to its translation length in $\Gamma_P$ in a crucial way. 

\end{proof}

\section{The Proof of Theorem \ref{thm:rfmain2}}\label{sec:4DRF}
In this section we obtain the analogous results of the previous section for hyperbolic manifolds that admit a totally geodesic immersion to a compact, right-angled Coxeter orbifold of dimension 4. We again denote the diameter and volume of $P$ by $d_P$ and $V_P$, respectively. As before, the images of $P$ under the action of $\Gamma_P$ tessellate ${\bf H}^4$. We have the following analog of Lemma \ref{lem:volume3}:

\begin{lemma}\label{lem:volume4}
Let $\Omega'$ be the analog in ${\bf H}^4$ of $\Omega$ from Lemma \ref{lem:volume3}. That is to say $\Omega'$ is a tubular $b$--neighborhood of the geodesic segment between the points $(0, 0, 0 , R_0)$ and $(0, 0, 0, r_0)$ lying on the $x_4$--axis in ${\bf H}^4$. Then \emph{Vol}$(\Omega') = \frac{4}{3}\pi \sinh^3(b) \, \ell$, where $\ell = \ln(R_0/r_0)$ is the length of the geodesic between the points $(0, 0, 0 , R_0)$ and $(0, 0 ,0, r_0)$ in ${\bf H}^4$. 

\end{lemma}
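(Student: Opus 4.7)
The plan is to mimic the proof of Lemma \ref{lem:volume3} verbatim, using four--dimensional spherical coordinates in the upper half space model ${\bf H}^4$ adapted to the geodesic $x_4$--axis. Introduce the change of variables
\[ x_1 = r\sin\phi\sin\theta_1\cos\theta_2,\quad x_2 = r\sin\phi\sin\theta_1\sin\theta_2,\quad x_3 = r\sin\phi\cos\theta_1,\quad x_4 = r\cos\phi,\]
whose Jacobian is $r^3 \sin^2\phi \sin\theta_1$. Dividing by $x_4^4 = r^4 \cos^4\phi$ converts the Euclidean volume form into the hyperbolic one, giving
\[ \frac{dx_1\wedge dx_2 \wedge dx_3 \wedge dx_4}{x_4^4} \;=\; \frac{1}{r}\,\tan^2\phi\,\sec^2\phi\,\sin\theta_1\,dr\wedge d\phi\wedge d\theta_1 \wedge d\theta_2,\]
which is the natural 4--dimensional analog of the form used in Lemma \ref{lem:volume3}.

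Next, I identify the region of integration. By rotational symmetry about the $x_4$--axis, $\theta_1$ and $\theta_2$ range over their full intervals $[0,\pi]$ and $[0,2\pi]$, respectively, and $r$ ranges over $[r_0,R_0]$. As in the 3--dimensional case, the points at hyperbolic distance exactly $b$ from the $x_4$--axis form a Euclidean cone through the origin; letting $\pi(b)$ denote the angle of parallelism, $\phi$ ranges over $[0,\pi/2-\pi(b)]$.

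The integral then separates cleanly:
\[ \text{Vol}(\Omega') \;=\; \Bigl(\int_{r_0}^{R_0}\frac{dr}{r}\Bigr)\Bigl(\int_0^{2\pi}\!d\theta_2\Bigr)\Bigl(\int_0^{\pi}\sin\theta_1\,d\theta_1\Bigr)\Bigl(\int_0^{\pi/2-\pi(b)}\tan^2\phi\,\sec^2\phi\,d\phi\Bigr).\]
The first three factors evaluate to $\ell$, $2\pi$, and $2$. The substitution $u = \tan\phi$ gives $\int\tan^2\phi\,\sec^2\phi\,d\phi = \tfrac{1}{3}\tan^3\phi$, so the last factor equals $\tfrac{1}{3}\cot^3\pi(b)$. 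Multiplying, $\text{Vol}(\Omega') = \tfrac{4\pi\ell}{3}\cot^3\pi(b)$.

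Finally, I invoke the angle of parallelism identity $\cot\pi(b) = \sinh b$ (see \cite[Section 7.9]{Beardon}) to rewrite this as $\tfrac{4}{3}\pi \sinh^3(b)\,\ell$, as claimed. I do not anticipate any genuine obstacle; the only point requiring care is verifying the Jacobian and the bookkeeping of the two extra spherical angles $\theta_1, \theta_2$ needed to sweep out the normal sphere in one higher dimension, which accounts for the factor $\tfrac{4}{3}$ replacing $1$ when passing from $\pi\sinh^2(b)\ell$ to $\tfrac{4}{3}\pi\sinh^3(b)\ell$.
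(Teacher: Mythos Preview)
Your proof is correct and essentially identical to the paper's own argument: the same spherical coordinates adapted to the $x_4$--axis (with your $\phi,\theta_1,\theta_2$ playing the role of the paper's $\phi_1,\phi_2,\phi_3$), the same separation of the integral, the same antiderivative $\tfrac{1}{3}\tan^3\phi$, and the same appeal to the angle-of-parallelism identity $\cot\pi(b)=\sinh b$ at the end.
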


\begin{proof}
To calculate the volume of $\Omega'$ we again find it convenient to use generalized spherical coordinates. In ${\bf H}^4$ spherical coordinates are defined by:

\begin{align}  x_1 &= r \sin \phi_1 \sin \phi_2 \sin \phi_3  \nonumber \\
x_2 &= r \sin \phi_1 \sin \phi_2 \cos \phi_3 \nonumber \\
x_3 &=   r \sin \phi_1 \cos \phi_2 \nonumber \\
x_4 &=   r \cos \phi_1 \nonumber,
\end{align}

\noindent where $\phi_1 \in \left[0, \frac{\pi}{2}\right), \phi_2 \in [0, \pi], \phi_3 \in [0, 2\pi]$. 
Thus, the volume form on ${\bf H}^4$ is 

\[\frac{dx_1 \wedge dx_2 \wedge dx_3 \wedge dx_4}{x_4^4} = \left(\frac{1}{r} \tan^2 \phi_1 \sec^2 \phi_1 \sin \phi_2\right) \, dr \wedge d\phi_1 \wedge d\phi_2 \wedge d\phi_3 .\]
 As before, the range of values for $\phi_1$ in $\Omega'$ is $[\, 0, \, \pi/2 - \pi(b)\, ]$, where $\pi(b)$ is the angle in the plane $L'_{x_1} \cap L'_{x_2}$ from the positive $x_3$-axis to the Euclidean ray consisting of points at hyperbolic distance $b$ from the $x_4$-axis. Additionally, the range of values of $\phi_2$ and $\phi_3$ in $\Omega'$ are unrestricted so that $\phi_2$ takes values in $[0, \pi]$ and $\phi_3$ takes values in $[0, 2\pi]$. Thus, 

 \vspace{.1 in}

\begin{center} $\text{Vol}(\Omega') = \displaystyle\iiiint \limits_{\Omega'} \, \, \left(\frac{1}{r} \tan^2 \phi_1 \sec^2 \phi_1 \sin \phi_2\right) \, dr \wedge d\phi_1 \wedge d\phi_2 \wedge d\phi_3  \; \;$

\vspace{.1 in}

 $=\displaystyle\int_{0}^{2\pi} \displaystyle\int_{0}^{\pi} \displaystyle\int_{0}^{\frac{\pi}{2} - \pi(b)}\displaystyle\int_{r_0}^{R_0} \left(\frac{1}{r} \tan^2 \phi_1 \sec^2 \phi_1 \sin \phi_2\right) \, dr \, d\phi_1 \, d\phi_2 \, d\phi_3 \; \; $
 
\vspace{.1 in}
 
$=\ln\left(\frac{R_0}{r_0}\right) \displaystyle\int_{0}^{2\pi}  \displaystyle\int_{0}^{\pi} \displaystyle\int_{0}^{\frac{\pi}{2} - \pi(b)} \tan^2 \phi_1 \sec^2 \phi_1 \sin \phi_2 \, \, \, d\phi_1 \, d\phi_2 \, d\phi_3 \; \; $

\vspace{.1 in}

$= \; \;  \ell \left[ \frac{\tan^3\phi_1}{3} \bigg|_{0}^{\frac{\pi}{2} - \pi(b)}\right]  \displaystyle\int_{0}^{2\pi} \displaystyle\int_{0}^{\pi}  \sin \phi_2 \, \, d\phi_2 \, d\phi_3 \; \; $

\vspace{.1 in}

$=\frac{2\pi}{3\tan^3 \pi(b)} \, \ell \; \; \left[ -\cos \phi_2 \bigg|_{0}^{\pi} \right]   = \frac{4\pi}{3\tan^3 \pi(b)} \, \ell = \; \; \frac{4}{3} \pi \sinh^3(b) \, \ell$, 

\end{center}

\vspace{.1 in}

\noindent with the last equality coming from the angle of parallelism laws in hyperbolic space (see \cite[Section 7.9]{Beardon}). 

\end{proof}

Next we prove the analog of Lemma \ref{lem:nottoofar}. As in the previous section, $\Phi = \langle \alpha \rangle$ is the cyclic subgroup generated by $\alpha$, $X$ is ${\bf H}^4/\Phi$, and $\overline \alpha$ is the unique simple closed geodesic in $X$. With $\widetilde Y$ as the geodesic axis for $\alpha$, the preimage of $\overline \alpha$ under the covering map $p: {\bf H}^4 \longrightarrow X$, we again denote the convexification of $\overline \alpha$ in $X$ by $\mathcal C = p(C_P(\widetilde Y))$.

\vspace{.1 in}

\begin{lemma}\label{lem:Rcalculation}
Let $\mathcal C$ be the union of polyhedra forming the convexification of \,$\overline \alpha$ in $X$ via the procedure mentioned above. Then any polyhedron $P_i \in \mathcal C$ must intersect $N = N_R(\overline \alpha)$, the $R$--neighborhood of $\overline \alpha$, where $R = \ln(2 + \sqrt{3})$.
\end{lemma}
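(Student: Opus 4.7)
The plan is to parallel the case analysis in the proof of Lemma \ref{lem:nottoofar}, with one additional case coming from the fact that polyhedra in ${\bf H}^4$ have vertices, edges, 2-faces, and 3-faces. Let $e$ be the minimal-dimensional face of $P$ containing its closest point to $\widetilde Y$, and set $k = \operatorname{codim}(e) \in \{1,2,3,4\}$. Let $\overline{py}$ be the shortest orthogeodesic from $e$ to $\widetilde Y$, and let $j\subset {\bf D}^4$ be the hyperplane through $y$ perpendicular to $\overline{py}$, which contains $\widetilde Y$. After applying an isometry sending $p$ to the origin of ${\bf D}^4$, the $k$ codimension-1 faces of $P$ meeting $e$ extend to $k$ mutually orthogonal hyperplanes $L_1,\ldots,L_k$ through the origin. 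Denote by $\partial L_i\subset S^3=\partial{\bf D}^4$ the corresponding great 2-spheres. As in the 3D argument, the threshold $R$ is the largest value of $\ell(\overline{py})$ for which the 2-sphere $\partial j\subset S^3$ can still be inscribed in the spherical region bounded by the $\partial L_i$; beyond this threshold, some $\partial L_i$ must fail to contain $\partial j$, and then $L_i$ separates $P$ from $\widetilde Y$.

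The critical new case is $k=4$, where the $\partial L_i$ cut out an all-right spherical tetrahedron in $S^3$ whose four 2-dimensional faces are all-right spherical triangles of edge length $\pi/2$, and the problem reduces to computing the spherical inradius of this tetrahedron. The quickest route uses the coordinate symmetry: placing $L_i=\{x_i=0\}$, the incenter is
\[
v=\tfrac{1}{2}(1,1,1,1)\in S^3,
\]
and by symmetry the inscribed 2-sphere touches the face $\partial L_4=\{x_4=0\}\cap S^3$ at its centroid $w=\tfrac{1}{\sqrt{3}}(1,1,1,0)$. Then $\cos r = v\cdot w = \tfrac{\sqrt{3}}{2}$, so $r=\pi/6$. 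Equivalently, the value $\pi/6$ can be extracted via iterated applications of the spherical law of cosines, exactly as in the inscribed-circle calculation for the 3D proof.

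Converting $r$ to a hyperbolic distance is identical to the cross-section step of Lemma \ref{lem:nottoofar}: intersecting the configuration with a hyperbolic plane containing $\overline{py}$, the sphere $\partial j$ appears as a circle of angular radius $\theta=r=\pi/6$ in a unit circle. With $c=\sec(\pi/6)=2/\sqrt{3}$, $q=\tan(\pi/6)=1/\sqrt{3}$, and $y=c-q=1/\sqrt{3}$, one obtains
\[
R=\ln\!\left(\frac{1+y}{1-y}\right)=\ln\!\left(\frac{\sqrt{3}+1}{\sqrt{3}-1}\right)=\ln(2+\sqrt{3}).
\]

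The remaining cases $k=3,2,1$ should each yield strictly smaller thresholds, leaving $R=\ln(2+\sqrt{3})$ as the answer. For $k=3$, symmetry places $\overline{py}$ along $\tfrac{1}{\sqrt{3}}(1,1,1,0)$ and the tangency of $\partial j$ with $\partial L_1=\{x_1=0\}\cap S^3$ at $\tfrac{1}{\sqrt{2}}(0,1,1,0)$, giving $\cos r=\sqrt{2/3}$ and $R=\ln(\sqrt{2}+\sqrt{3})$, matching the 3-dimensional $k=3$ case. The $k=2$ case gives spherical inradius $\pi/4$ (the incenter sits at $\tfrac{1}{\sqrt{2}}(1,1,0,0)$) and hence $R=\ln(\sqrt{2}+1)$, and the $k=1$ case is trivial as in the 3D proof. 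The main obstacle is the $k=4$ inradius computation, which is the one genuinely new ingredient beyond the 3-dimensional proof; once the incenter/centroid symmetry is observed, it collapses to a single dot product.
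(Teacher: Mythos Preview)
Your proof is correct and follows essentially the same strategy as the paper: reduce to the cases $k=1,2,3,4$, note that $k\le 3$ are inherited from Lemma~\ref{lem:nottoofar}, and for $k=4$ compute the inradius of the all-right spherical tetrahedron in $S^3$, then convert that inradius to the hyperbolic threshold $R$ via the identical cross-section argument. The one substantive difference is in how the inradius is obtained. The paper extracts $r=\cos^{-1}(\sqrt{3}/2)$ by applying the spherical law of cosines twice to the right triangle with vertices at an edge-midpoint $A$, a tangency point $M$, and the center $O$ (using the 3-dimensional inradius $\cos^{-1}(\sqrt{2}/\sqrt{3})$ as the length $|AM|$). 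You instead exploit the coordinate symmetry directly: placing $L_i=\{x_i=0\}$, the incenter is $v=\tfrac{1}{2}(1,1,1,1)$, the foot on $\partial L_4$ is $w=\tfrac{1}{\sqrt{3}}(1,1,1,0)$, and $\cos r=v\cdot w=\sqrt{3}/2$. Your route is shorter and avoids any spherical trigonometry; it also makes transparent why the lower-codimension cases reproduce the 3-dimensional values (the relevant incenters and tangency points simply live in coordinate subspaces), whereas the paper handles those cases by invoking Lemma~\ref{lem:nottoofar} verbatim.
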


\begin{proof}
As in the proof of Lemma \ref{lem:nottoofar} we let $\widetilde N$ be the preimage of $N$ in ${\bf D}^4$, which forms an $R$--neighborhood around $\widetilde Y$. Suppose $P$ is a polyhedron in our tessellation of ${\bf D}^4$ which does not intersect $\widetilde N$. We aim to show that a hyperplane in ${\bf D}^4$ containing one of the codimension--1 faces of $P$ must separate $P$ from $\widetilde Y$, demonstrating that $P \notin \widetilde {\mathcal{C}}$ and proving the lemma. 

Let $e$ be the minimal-dimensional face of $P$ containing its closest point to $\widetilde Y$ and set $k = $codim$(e)$. Again, $k \in [1, 4]$ is also the number of codimension--1 faces of $P$ that intersect $e$. Take a shortest geodesic $\overline{py}$ from $e$ to $\widetilde Y$ that intersects $\widetilde Y$ at a point $y$ and $e$ at a point $p$. Then $\overline{py}$ is an orthogeodesic with $\ell(\overline{py}) \geq R$ since $P$ does not intersect the $R$--neighborhood of $\widetilde Y$. We let $j$ be a hyperplane through $y$ that is perpendicular to $\overline{py}$, which separates $e$ from $\widetilde Y$. Again $j$ necessarily contains $\widetilde Y$.

We note that the proof in the cases $k = 3, 2, 1$ are exactly the proofs for $k = 3, 2, 1$, respectively, of Lemma \ref{lem:nottoofar}. We therefore consider the case where $e$ is a vertex of $P$ and $k = 4$. We begin by sending $e = p$ to the origin of ${\bf D}^4$ via isometries. Recall that $L_{x_i}$ is the hyperplane of ${\bf D}^4$ obtained by restricting the $x_i$ coordinate to zero for $i = 1, 2, 3, 4$. Since $P$ is an all right polyhedron, the four codimension--1 faces of $P$ that intersect $e$ necessarily lie in four hyperplanes $L_1$, $L_2$, $L_3$, and $L_4$ that are the isometric image of $L_{x_1}$, $L_{x_2}$, $L_{x_3}$, and $L_{x_4}$. Letting $\partial L_i = \partial {\bf D}^3 \cap L_i$, we see that $\partial L_1$, $\partial L_2$, $\partial L_3$, and $\partial L_4$ form an all right-angled spherical tetrahedron in $\partial {\bf D}^4$. We are, therefore, interested in the threshold, $R$, of the distance between $P$ and $\widetilde Y$ so that $\partial j$ can be inscribed in an all right spherical tetrahedron formed by the boundaries of four pairwise orthogonal hyperplanes. Then if $d(P, \widetilde Y) > R$, at least one of the four two-spheres $\partial L_i$ cannot intersect $\partial j$, and thus, one of the four hyperplanes $L_i$ must separate $P$ from $\widetilde Y$. 

We begin by calculating the radius $r$ of a sphere inscribed in such an all right tetrahedron as shown in Figure \ref{fig:tetra}. As indicated by the figure, we calculate the radius $r$ using a spherical triangle formed by a midpoint of an edge of the tetrahedron, $A$, a point of tangency of the inscribed sphere,$M$, and the center $O$ of the sphere.

\begin{figure}[h]
\centering
\begin{overpic}[trim = .75in 3.5in 1.55in 2.35in, clip=true, totalheight=0.23\textheight]{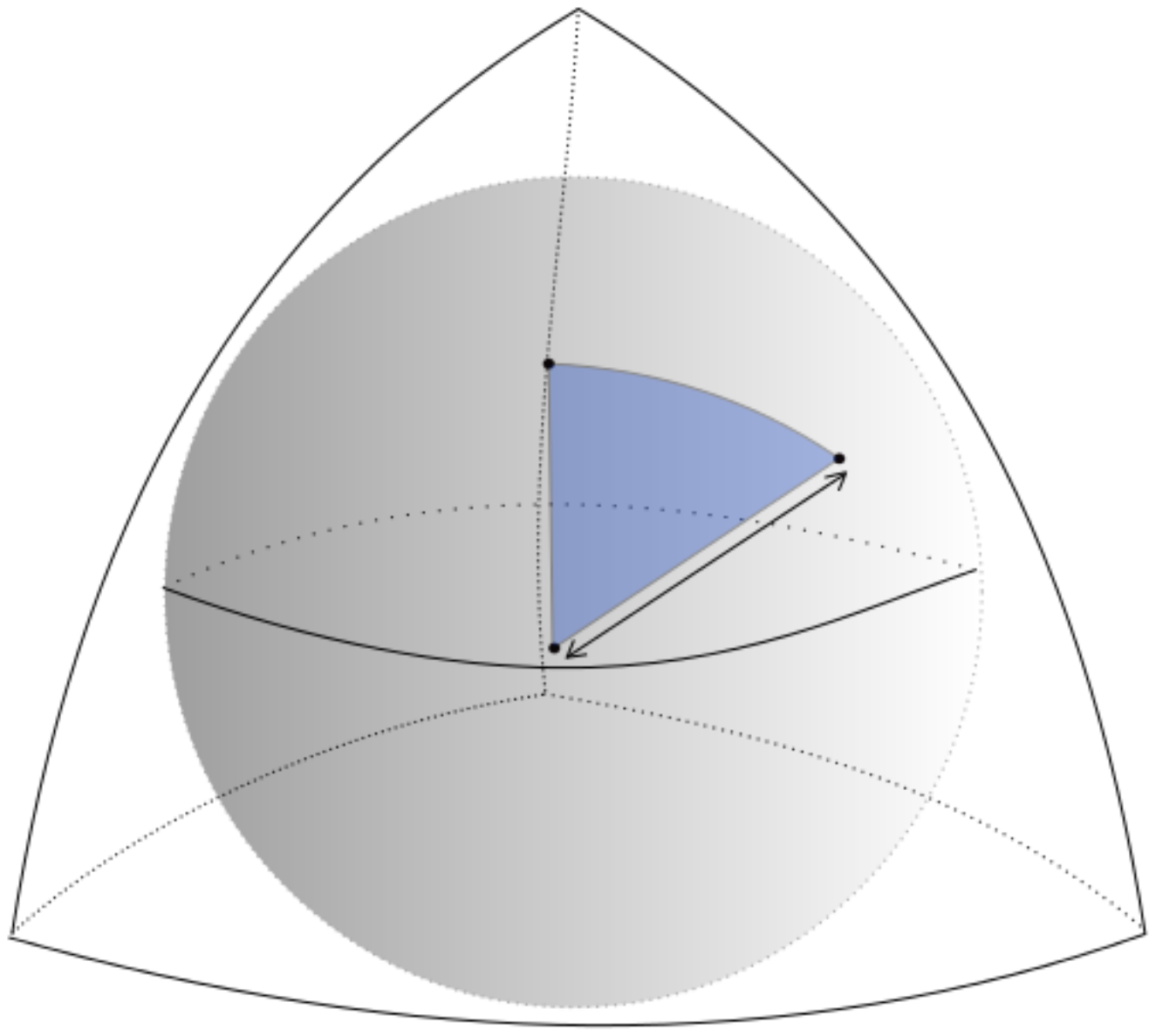}
\put(45,52){\scriptsize{$A$}}
\put(44,30){\scriptsize{$O$}}
\put(70,45){\scriptsize{$M$}}
\put(60,32){\scriptsize{$r$}}
\end{overpic}
\caption{}\label{fig:tetra}
\end{figure}

Note that the angle at $A$ is $\frac{\pi}{4}$, the angle at $M$ is $\frac{\pi}{2}$, and the length of the edge between $M$ and $A$ is the radius of a circle inscribed in a spherical triangle whose edge lengths are $\frac{\pi}{2}$, which we calculated above to be $\cos^{-1}\left(\frac{\sqrt{2}}{\sqrt{3}}\right)$. An application of a spherical law of cosines gives that the angle at $O$ is $\cos^{-1}\left(\frac{1}{\sqrt{3}}\right)$, and a second application gives that  $r = \cos^{-1}\left(\frac{\sqrt{3}}{2}\right)$.

We now use the same cross sectional picture as in the proof of Lemma \ref{lem:nottoofar}, which is shown is Figure \ref{fig:crosssec}. The angle $\theta$ is again equal to $r$, where now $r = \cos^{-1}\left(\frac{\sqrt{3}}{2}\right)$. Therefore, $c = \sec(\theta)= \frac{2}{\sqrt{3}}$ and $q= \tan(\theta) = \frac{1}{\sqrt{3}}$. It follows that $y = c - q = \frac{1}{\sqrt{3}}$ and $R = d(0, \widetilde Y) = \ln\left(\frac{1 + y}{1 - y}\right)$, which by a simple calculation gives us $R = \ln(\sqrt{4} + \sqrt{3}) = \ln(2 + \sqrt{3})$.

 Taking the maximum $R$ over the four cases gives $R = \ln(2 + \sqrt{3})$.

\end{proof}

The proof of Theorem \ref{thm:rfmain2} now follows exactly as the proof of Theorem \ref{thm:rfmain}. Lemma \ref{lem:Rcalculation} tells us that $\mathcal C \subset N_{R+d_P}(\overline \alpha)$, where $R$ can be taken to be $\ln(2 + \sqrt{3})$. We take a subset of the preimage of $N_{R+d_P}(\overline \alpha)$ in ${{\bf H}^4}$ that is isometric to $N_{R+d_P}(\overline{\alpha})$ and which forms a region like $\Omega$ from Lemma \ref{lem:volume4}, where $b = R + d_P$. The lemma then implies that \[\text{ Vol } (\mathcal C) <  \text{ Vol }(N_{R+d_P}(\overline \alpha)) = \frac{4}{3} \pi \sinh^3(R + d_P) \, \ell_\rho(\alpha) = \frac{4}{3} \pi \sinh^3(\ln(2 + \sqrt{3}) + d_P) \, \ell_\rho(\alpha).\]

\begin{theorem}\label{thm:rfmain2}
Let $(M, \rho)$ be a hyperbolic manifold that admits a totally geodesic immersion to a compact, right-angled Coxeter orbifold, $\mathcal{O}_P$, of dimension 4.Then for any $\alpha \in \pi_1(M)-\{1\}$, there exists a subgroup $H'$ of $\pi_1(M)$ such that $\alpha \notin H'$, and the index of $H'$ is bounded above by \[ \dfrac{8\pi }{3V_P}\sinh^3(\ln(2 + \sqrt{3}) + d_P) \,\ell_\rho(\alpha),\] where $\ell_\rho(\alpha)$ is the length of the unique geodesic representative of $\alpha$, and where $d_P$ and $V_P$ are the diameter and volume of $P$, respectively.

\end{theorem}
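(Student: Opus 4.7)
The plan is to mirror the proof of Theorem \ref{thm:rfmain} step by step, now using the four-dimensional analogs already in place: Lemma \ref{lem:Rcalculation} in place of Lemma \ref{lem:nottoofar}, and Lemma \ref{lem:volume4} in place of Lemma \ref{lem:volume3}. The volume estimate
\[
\text{Vol}(\mathcal{C}) < \frac{4}{3}\pi\sinh^3(\ln(2+\sqrt{3}) + d_P)\,\ell_\rho(\alpha)
\]
is already established in the text immediately preceding the theorem, so the first step is simply to divide through by $V_P$ to bound the number $k$ of polyhedra in $\mathcal{C}$:
\[
k < \frac{4\pi}{3V_P}\sinh^3(\ln(2+\sqrt{3}) + d_P)\,\ell_\rho(\alpha).
\]

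Next, I would lift the situation to ${\bf H}^4$. Fix a lift $\widetilde{\alpha}$ of the geodesic representative $\overline{\alpha}$ to ${\bf H}^4$; using the right-angled tiling of $X = {\bf H}^4/\langle\alpha\rangle$, this gives a lift $\overline{\mathcal{C}}$ of $\mathcal{C}$ which is a connected convex union of $k$ polyhedra in $C_P(\widetilde{Y})$ containing $\widetilde{\alpha}$. To ensure an endpoint of $\widetilde{\alpha}$ sits in the interior of the eventual fundamental domain, I take the lift $\widetilde{\alpha}_1$ of $\overline{\alpha}$ that shares an endpoint with $\widetilde{\alpha}$, along with its associated convex lift $\overline{\mathcal{C}_1}$, and set $\mathcal{C}' = \overline{\mathcal{C}} \cup \overline{\mathcal{C}_1}$. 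This is a convex union of $2k$ polyhedra in ${\bf H}^4$ containing $\widetilde{\alpha}$ so that one endpoint of $\widetilde{\alpha}$ lies in its interior.

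I then apply the Poincaré Polyhedron Theorem: the group $H < \Gamma_P$ generated by reflections in the codimension--1 faces of $\mathcal{C}'$ has $\mathcal{C}'$ as a fundamental domain, so $[\Gamma_P:H] = 2k$. Let $p:{\bf H}^4 \to {\bf H}^4/H$ be the covering map; its restriction to the interior of $\mathcal{C}'$ is a homeomorphism onto its image, which implies that $p(\widetilde{\alpha})$ is not a loop, i.e.\ $\alpha \notin H$. Setting $H' = H \cap \pi_1(M)$ gives a subgroup of $\pi_1(M)$ with $\alpha \notin H'$ and
\[
[\pi_1(M):H'] \leq [\Gamma_P:H] = 2k < \frac{8\pi}{3V_P}\sinh^3(\ln(2+\sqrt{3}) + d_P)\,\ell_\rho(\alpha),
\]
which is the bound claimed, using $\pi_1(M) < \Gamma_P$ and that $\ell_\rho(\alpha)$ equals the translation length of $\alpha$ in $\Gamma_P$.

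Since every technical ingredient is already proved, there is no real obstacle — the proof is essentially a verification that the argument of Theorem \ref{thm:rfmain} transports verbatim to dimension 4. The only point requiring mild care is the Poincaré Polyhedron step: one must confirm that the convex union $\mathcal{C}'$ of $2k$ all-right polyhedra is itself an all-right polyhedron meeting the theorem's hypotheses, which follows from convexity together with the fact that all dihedral angles within the tessellation are $\pi/2$.
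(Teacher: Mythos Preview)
Your proposal is correct and follows essentially the same approach as the paper's own proof: bound $k$ via the volume estimate, form the doubled convex lift $\mathcal{C}' = \overline{\mathcal{C}}\cup\overline{\mathcal{C}_1}$ of $2k$ polyhedra, apply the Poincar\'e Polyhedron Theorem to the reflection group $H$ in its faces, and intersect with $\pi_1(M)$. The paper's write-up is terser (it simply refers back to the construction in the proof of Theorem~\ref{thm:rfmain}), but the argument is identical.
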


\begin{proof}
We know that Vol$(\mathcal C) < \frac{4}{3} \pi \sinh^3(\ln(2 + \sqrt{3}) + d_P) \, \ell_\rho(\alpha)$ so if $\mathcal C$ consists of $k$ polyhedra, \[k < \dfrac{4\pi}{3V_P} \sinh^3(\ln(2 + \sqrt{3}) + d_P) \, \ell_\rho(\alpha).\]

\noindent We form the convex set $\mathcal C' = \overline{\mathcal C} \cup \overline{\mathcal{C}_1}$ as in the proof of Theorem \ref{thm:rfmain}. Thus, $C'$ is the convex union of $2k$ copies of $P$ containing one endpoint of a lift $\widetilde \alpha$ of $\alpha$ to ${\bf H}^4$ in its interior. 

Let $H$ be the group of isometries of ${\bf H}^4$ generated by reflections in the sides of $\mathcal C'$. Then $H < \Gamma_P$, and $\mathcal C'$ is a fundamental domain for the action of $H$ on ${\bf H}^4$ by the Poincar\'{e} Polyhedron Theorem. Thus, the image of $\widetilde \alpha$ is not a loop in ${\bf H}^4/H$, and $\alpha \notin H$. Now, let $H' = H \cap \pi_1(M)$. Then, $\alpha \notin H'$ and $[\pi_1(M)\!:\!H'] \leq [\Gamma_P\!:\!H] = 2k$. The result follows. 

\end{proof}

\noindent An immediate corollary of Theorems \ref{thm:rfmain} and \ref{thm:rfmain2} is:

\begin{corollary}\label{cor:linear}
Let $(M, \rho)$ be a hyperbolic manifold admitting a totally geodesic immersion to a compact, right-angled Coxeter orbifold of dimension 3 or 4. Then, the geodesic residual finiteness growth is at most linear. That is to say, $\DM_{M, \rho} \preceq n$.
\end{corollary}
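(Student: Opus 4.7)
The plan is to obtain this as a direct consequence of Theorems \ref{thm:rfmain} and \ref{thm:rfmain2} together with the definition of the relation $\preceq$ from the introduction. Fix $\mathcal{O}_P$ of dimension $3$ or $4$, and set
\[
C_P \;=\; \frac{2\pi}{V_P}\sinh^2\!\bigl(\ln(\sqrt{3}+\sqrt{2}) + d_P\bigr) \quad \text{or} \quad C_P \;=\; \frac{8\pi}{3V_P}\sinh^3\!\bigl(\ln(2+\sqrt{3}) + d_P\bigr),
\]
according to whether $\dim \mathcal{O}_P = 3$ or $4$. The crucial observation is that $C_P$ is a finite positive constant depending only on $P$ (finite because $P$ is compact, so $d_P < \infty$, and positive because $V_P > 0$); it is independent of the choice of $\alpha$. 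Thus the two theorems together give, for every $\alpha \in \pi_1(M) \setminus \{1\}$, a finite-index subgroup $H' \leq \pi_1(M)$ with $\alpha \notin H'$ and $[\pi_1(M):H'] \leq C_P \cdot \ell_\rho(\alpha)$, yielding the pointwise divisibility bound $D_{\pi_1(M)}(\alpha) \leq C_P \cdot \ell_\rho(\alpha)$.

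To finish, I would take the maximum of this inequality over the set $\{\alpha \in \pi_1(M) \setminus \{1\} : \ell_\rho(\alpha) \leq n\}$. Since the right-hand side is monotone in $\ell_\rho(\alpha)$, this immediately produces $\DM_{M,\rho}(n) \leq C_P \cdot n$. Setting $C = \max(C_P, 1)$, we get $\DM_{M,\rho}(n) \leq C_P \cdot n \leq C \cdot (Cn)$, which is exactly the definition of $\DM_{M,\rho} \preceq n$ with comparison function $g(n) = n$. I do not anticipate any real obstacle here: all of the substantive work has already been done in proving Theorems \ref{thm:rfmain} and \ref{thm:rfmain2}, and the passage from a per-element linear bound to a linear bound on the growth function is purely formal. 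The only thing one needs to verify is that no case of the theorems is being missed, which is ensured by the hypothesis that $\dim \mathcal{O}_P \in \{3,4\}$ matching the two theorems' hypotheses exactly.
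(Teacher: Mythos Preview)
Your proposal is correct and matches the paper's approach: the paper simply states this as ``an immediate corollary of Theorems \ref{thm:rfmain} and \ref{thm:rfmain2}'' with no further argument, and your write-up just makes explicit the routine passage from the per-element bound $D_{\pi_1(M)}(\alpha)\le C_P\,\ell_\rho(\alpha)$ to $\DM_{M,\rho}(n)\le C_P\,n$.
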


\section{Extension to Manifolds that Virtually Immerse into Compact Reflection Orbifolds}\label{sec:virtually}
In this section we extend Corollary~\ref{cor:linear} to all hyperbolic manifolds that virtually admit a totally geodesic immersion to a compact, right-angled Coxeter orbifold of dimension 3 or 4. The key fact is the following lemma, which is the analog of \cite[Lemma~2.2]{BHP} for geodesic residual finiteness growth functions (rather than the usual residual finiteness growth functions, calculated with respect to word length).

\begin{lemma}\label{lem:boundextends}
Let $(M, \rho)$ be a hyperbolic $n$--manifold and let $K \leq \pi_1(M)$ be a finite index subgroup with $[\pi_1(M) : K] = C$. Let $(M', \rho')$ be the cover of $M$ of index $C$ corresponding to the subgroup $K$. Then the geodesic residual finiteness function for $(M,\rho)$ is bounded by that of $(M', \rho')$. That is to say, $\DM_{M, \rho} \leq C\cdot \DM_{M', \rho'}$ and hence $\DM_{M, \rho} \preceq \DM_{M', \rho'}$.\end{lemma}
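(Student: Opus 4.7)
The plan is to prove the lemma by a straightforward case split on whether $\alpha \in K$ or not, and then to combine the two cases uniformly.

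Fix $\alpha \in \pi_1(M) \setminus \{1\}$ with $\ell_\rho(\alpha) \leq n$; I want to exhibit a finite-index subgroup of $\pi_1(M)$ avoiding $\alpha$ whose index is at most $C \cdot \DM_{M',\rho'}(n)$. If $\alpha \notin K$, then $K$ itself is such a subgroup and its index is $C$, which is $\leq C \cdot \DM_{M', \rho'}(n)$ since $\DM_{M',\rho'}(n) \geq 1$. The interesting case is $\alpha \in K = \pi_1(M')$. Here the key observation is that the geodesic length of $\alpha$ is unchanged when we pass from $(M, \rho)$ to its cover $(M', \rho')$: viewing $\pi_1(M)$ as a group of deck transformations acting on $\mathbb{H}^n$, the translation length of $\alpha$ along its axis is an intrinsic invariant of $\alpha$ as an isometry of $\mathbb{H}^n$, and both $\ell_\rho(\alpha)$ and $\ell_{\rho'}(\alpha)$ equal this translation length. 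Equivalently, the covering map $M' \to M$ is a local isometry, so the closed geodesic representing $\alpha$ in $M$ lifts to a closed geodesic representing $\alpha$ in $M'$ of the same length.

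Given this, I apply the definition of $\DM_{M', \rho'}$ to the element $\alpha \in \pi_1(M') \setminus \{1\}$ with $\ell_{\rho'}(\alpha) = \ell_\rho(\alpha) \leq n$. This yields a subgroup $H'' \leq \pi_1(M')$ with $\alpha \notin H''$ and $[\pi_1(M') : H''] \leq \DM_{M', \rho'}(n)$. Viewing $H''$ as a subgroup of $\pi_1(M)$, I get
\[
[\pi_1(M) : H''] \;=\; [\pi_1(M) : \pi_1(M')]\,[\pi_1(M') : H''] \;\leq\; C\cdot \DM_{M',\rho'}(n).
\]
Combining the two cases and taking the maximum over all $\alpha$ with $\ell_\rho(\alpha) \leq n$ gives $\DM_{M,\rho}(n) \leq C \cdot \DM_{M',\rho'}(n)$. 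The asymptotic conclusion $\DM_{M,\rho} \preceq \DM_{M',\rho'}$ follows immediately from the definition of $\preceq$, since $\DM_{M',\rho'}$ is non-decreasing.

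There is no real obstacle here; the only point that requires care is the identification $\ell_{\rho'}(\alpha) = \ell_\rho(\alpha)$ for $\alpha \in \pi_1(M') \leq \pi_1(M)$, which is what makes a \emph{geodesic} residual finiteness bound on the cover translate to a geodesic bound downstairs. This is precisely the property that could fail for the word-length residual finiteness function, since word lengths depend on the choice of generating set and do not transform so cleanly under passage to finite-index subgroups; the corresponding word-length version (\cite[Lemma 2.2]{BHP}) requires a slightly different argument, which is why the lemma is stated and proved here separately.
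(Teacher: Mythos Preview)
Your proof is correct and follows essentially the same approach as the paper: both arguments split on whether $\alpha \in K$, use $K$ itself in the case $\alpha \notin K$, and in the case $\alpha \in K$ invoke the key fact $\ell_{\rho'}(\alpha) = \ell_\rho(\alpha)$ to pull a separating subgroup of $K$ down to $\pi_1(M)$ via multiplicativity of index. The paper packages the first step as an inequality $D_{\pi_1(M)}(\alpha) \leq C\cdot D_K(\alpha)$ (with the convention $D_K(\alpha)=1$ when $\alpha\notin K$) before comparing maxima, but this is only a cosmetic difference in presentation.
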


\begin{proof}
For an element $\alpha \in \pi_1(M)$,  we see that \[\left \{H \leq \pi_1(M) : \alpha \notin H \right\}  \supseteq  \left \{ K' \leq K \leq \pi_1(M) : \alpha \notin K' \right\}.\] Therefore, 
\begin{align} 
D_{\pi_1(M)}(\alpha) &= \min \left \{ [ \pi_1(M) : H] : \alpha \notin H, H \leq \pi_1(M) \right\} \nonumber \\
& \leq \min \left \{ [ \pi_1(M) : K'] : \alpha \notin K', K' \leq K \leq \pi_1(M) \right\} \nonumber \\
&= C  \min \left \{ [K : K'] : \alpha \notin K' , K' \leq K \right \}= C \cdot D_{K}(\alpha), \nonumber
\end{align}

\vspace{.1 in}

\noindent where we set $D_{K}(\alpha) = 1$ if $\alpha \notin K$. The equality above comes from the fact that $[ \pi_1(M) : K'] = [\pi_1(M) : K ] [K : K'] = C [K: K']$. 

Next, we claim that 
\begin{align}
\DM_{M, \rho}(n) = &\max \left \{D_{\pi_1(M)}(\alpha) : \alpha \in \pi_1(M) - \{1\}, \ell_{\rho}(\alpha) \leq n\right\} \nonumber \\
\leq &\max \left \{ C \cdot D_{K}(\beta) : \beta \in K -\{1\}, \ell_{\rho'}(\beta) \leq n  \right \} = C \cdot \DM_{M', \rho'}. \nonumber
\end{align}

\noindent First, observe that $D_{\pi_1(M)}(\alpha) \leq C$ for all $\alpha \notin K$. Thus, if the maximum value for $D_{\pi_1(M)}(\alpha)$ on the set $\left\{\alpha \in \pi_1(M) -\{1\} : \ell_\rho(\alpha) \leq n\right\}$ is achieved by an element $\alpha \notin K$, the above inequality follows from the fact that $\DM_{M', \rho'} \geq 1$.  Additionally, we have that for all $\alpha \in K \leq \pi_1(M)$, $\ell_{\rho}(\alpha) = \ell_{\rho'}(\alpha)$. If the maximum value of $D_{\pi_1(M)}(\alpha)$ on the set $\left\{\alpha \in \pi_1(M) -\{1\} : \ell_\rho(\alpha) \leq n\right\}$ is achieved by an element $\alpha \in K \leq \pi_1(M)$, then $\alpha \in \left\{\beta \in K -\{1\} : \ell_\rho'(\beta) \leq n\right\}$. Therefore, the maximum value of $C \cdot D_{K}(\beta)$ on the set $ \left\{\beta \in K -\{1\} : \ell_\rho'(\beta) \leq n\right\}$ is at least as big as $C \cdot D_{K}(\alpha) = D_{\pi_1(M)}(\alpha)$, and the inequality follows.

\end{proof}

\noindent Corollary \ref{cor:linear}, together with Lemma \ref{lem:boundextends}, gives the following:

\begin{corollary}\label{cor:virtually}
If $(M, \rho)$ is a hyperbolic manifold that virtually admits a totally geodesic immersion to a compact, right-angled Coxeter orbifold of dimension 3 or 4, then $\DM_{M, \rho} \preceq n$.
\end{corollary}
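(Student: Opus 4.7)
The plan is to deduce Corollary \ref{cor:virtually} by directly combining Corollary \ref{cor:linear} with Lemma \ref{lem:boundextends}. The hypothesis that $(M,\rho)$ \emph{virtually} admits a totally geodesic immersion means, by definition, that there is a finite sheeted cover $(M',\rho')$ of $(M,\rho)$ which itself admits a totally geodesic immersion to some compact, right-angled Coxeter orbifold $\mathcal{O}_P$ of dimension $3$ or $4$. Call the degree of this cover $C$, and let $K = \pi_1(M') \leq \pi_1(M)$ be the corresponding finite index subgroup, so $[\pi_1(M):K]=C$.

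First I would apply Corollary \ref{cor:linear} to $(M',\rho')$: since $(M',\rho')$ admits a genuine (not just virtual) totally geodesic immersion to $\mathcal{O}_P$, we get $\DM_{M',\rho'} \preceq n$. Explicitly, Theorem \ref{thm:rfmain} (in dimension 3) or Theorem \ref{thm:rfmain2} (in dimension 4) provides a constant $A_P>0$, depending only on $P$, such that $\DM_{M',\rho'}(n) \leq A_P \cdot n$ for all $n \in {\bf N}$.

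Next I would invoke Lemma \ref{lem:boundextends} applied to the pair $K \leq \pi_1(M)$ with cover $(M',\rho') \to (M,\rho)$, obtaining $\DM_{M,\rho}(n) \leq C \cdot \DM_{M',\rho'}(n)$ for every $n$. Chaining the two bounds yields $\DM_{M,\rho}(n) \leq C\cdot A_P \cdot n$, and taking $C' = \max\{1, C A_P\}$ gives the desired $\preceq$-comparison $\DM_{M,\rho}(n) \leq C' \cdot n \leq C' \cdot (C' n)$, i.e.\ $\DM_{M,\rho} \preceq n$.

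Since both Corollary \ref{cor:linear} and Lemma \ref{lem:boundextends} have already been established in the paper, no new obstacle arises; the only subtle point worth flagging explicitly is that Lemma \ref{lem:boundextends} compares \emph{geodesic} residual finiteness growths (not word-length versions), which is precisely the form in which Corollary \ref{cor:linear} produces its output. The argument is therefore essentially a one-line concatenation of these two results.
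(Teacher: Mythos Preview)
Your proposal is correct and is exactly the approach taken in the paper: the authors simply state that Corollary~\ref{cor:linear} together with Lemma~\ref{lem:boundextends} immediately yields Corollary~\ref{cor:virtually}, and your write-up just makes this concatenation explicit.
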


\section{Comparison of residual finiteness growth and geodesic residual finiteness growth}\label{sec:comparison}

\subsection{Equivalence of growths in the compact setting:}

Recall that the Svarc-Milnor Lemma \cite[P. 140]{BridsonHaefliger} tells us that for a compact hyperbolic manifold $(M, \rho)$ with universal cover $\widetilde M$, the map $\phi: \pi_1(M) \longrightarrow \widetilde{M}$ defined by $\phi(g) = g\cdot x$, for a fixed but arbitrary point $x \in \widetilde M$, is a quasi-isometry. We note that the compactness condition is necessary.

The universal cover $\widetilde M$ of $(M, \rho)$ can be embedded isometrically in ${\bf H}^n$ and the action of $\pi_1(M)$ on $\widetilde M$ extends naturally, so that $\pi_1(M)$ acts freely and properly discontinuously on ${\bf H}^n$. Of course when $M$ is closed, $\widetilde M$ is all of ${\bf H}^n$. Let $\alpha \in \pi_1(M) -\{1\}$ and recall that for a hyperbolic isometry $\alpha$ of ${\bf H}^n$, the distance between a point $z \in {\bf H}^n$ and its translate $\alpha \cdot z$ is minimized on the geodesic axis, $L_\alpha$, for $\alpha$. For a point $z_0$ on $L_\alpha$, the distance between $z_0$ and $\alpha \cdot z_0$ is the length of the unique geodesic representing $\alpha$ in $(M, \rho)$, which is often called the translation length of isometry $\alpha$. 

The following lemma shows that the residual finiteness growth and the geodesic residual finiteness growth for compact hyperbolic manifolds are the same:

\begin{lemma}\label{lem:equivalent}
Let $(M, \rho)$ be a compact hyperbolic manifold, and let $\DM_{{\pi_1(M)},\mathcal S}$ and $\DM_{M, \rho}$ be the residual finiteness function for $\pi_1(M)$ and the geodesic residual finiteness function for $M$, respectively. Then,  $\DM_{{\pi_1(M)},\mathcal S} \simeq \DM_{M, \rho}$. 
\end{lemma}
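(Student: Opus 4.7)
The plan is to leverage the Svarc-Milnor quasi-isometry between $\pi_1(M)$, equipped with the word metric from $\mathcal{S}$, and the universal cover $\widetilde M$ with the pullback hyperbolic metric. The key observation is that both the word length $\|g\|_{\mathcal{S}}$ and the translation length $\ell_\rho(g)$ can be compared to the displacement $d_{\widetilde M}(x, g \cdot x)$ for a fixed basepoint $x \in \widetilde M$. By Svarc-Milnor there exist constants $A \geq 1$, $B \geq 0$ with $\tfrac{1}{A}\|g\|_{\mathcal{S}} - B \leq d_{\widetilde M}(x, g \cdot x) \leq A\|g\|_{\mathcal{S}} + B$ for every $g \in \pi_1(M)$.

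For $\DM_{\pi_1(M),\mathcal{S}} \preceq \DM_{M,\rho}$ the argument is immediate: since the translation length of a hyperbolic isometry is the infimum of the displacement over all points of $\widetilde M$, one has $\ell_\rho(g) \leq d_{\widetilde M}(x, g \cdot x)$. Combined with Svarc-Milnor, any $g$ with $\|g\|_{\mathcal{S}} \leq n$ satisfies $\ell_\rho(g) \leq An + B$, so taking the maximum of $D_{\pi_1(M)}$ over such elements gives $\DM_{\pi_1(M),\mathcal{S}}(n) \leq \DM_{M,\rho}(An + B)$, from which the $\preceq$ inequality follows.

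The reverse inequality is the main obstacle, because in general $d_{\widetilde M}(x, g \cdot x)$ can be much larger than $\ell_\rho(g)$ when the axis of $g$ lies far from $x$. The remedy is a conjugation trick. Both $\ell_\rho$ and $D_{\pi_1(M)}$ are invariant under conjugation: if $\alpha \notin H$ then $h^{-1}\alpha h \notin h^{-1}Hh$ and $[\pi_1(M):H] = [\pi_1(M) : h^{-1}Hh]$, while conjugation preserves axes and translation lengths. Using the compactness of $M$, the orbit $\pi_1(M) \cdot x$ is a $D$-net in $\widetilde M$, where $D$ may be taken to be the diameter of $M$. Given $\alpha \in \pi_1(M) - \{1\}$, I pick any point $y$ on its axis $L_\alpha$ and choose $h \in \pi_1(M)$ with $d_{\widetilde M}(h \cdot x, y) \leq D$. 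Then the conjugate $\alpha' = h^{-1}\alpha h$ has axis $h^{-1} \cdot L_\alpha$ containing $h^{-1} y$, which lies within $D$ of $x$, and the triangle inequality together with the fact that $\alpha'$ acts by an isometry translating $h^{-1}y$ along its axis yields $d_{\widetilde M}(x, \alpha' \cdot x) \leq 2D + \ell_\rho(\alpha)$.

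Applying Svarc-Milnor in the other direction then produces $\|\alpha'\|_{\mathcal{S}} \leq A(\ell_\rho(\alpha) + 2D) + AB$. Since $D_{\pi_1(M)}(\alpha) = D_{\pi_1(M)}(\alpha')$, this yields $\DM_{M,\rho}(n) \leq \DM_{\pi_1(M),\mathcal{S}}(An + C)$ for $C := 2AD + AB$, hence $\DM_{M,\rho} \preceq \DM_{\pi_1(M),\mathcal{S}}$, and the two growths are equivalent. The crucial role of compactness is twofold: it is used both to invoke Svarc-Milnor and to guarantee a uniform bound $D$ on the distance from $x$ to an orbit point lying near any prescribed axis in $\widetilde M$.
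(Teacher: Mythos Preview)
Your proof is correct and follows essentially the same approach as the paper: both directions rely on the Svarc--Milnor quasi-isometry, with the harder inequality $\DM_{M,\rho} \preceq \DM_{\pi_1(M),\mathcal S}$ handled via the conjugation trick (using cocompactness to find an orbit point within a uniform distance $D$ of the axis, then applying the triangle inequality and the conjugation-invariance of both $\ell_\rho$ and $D_{\pi_1(M)}$). The only cosmetic difference is that you keep an explicit affine bound $\|\alpha'\|_{\mathcal S} \leq An + C$, whereas the paper absorbs the additive constant into a single multiplicative constant; both yield the same $\preceq$ conclusion.
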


\begin{proof}

Fix any finite generating set $\mathcal{S}$ of $\pi_1(M)$ and a point $x \in \widetilde M$. We first aim to show that there exists a constant $C > 0$ such that for all $\alpha \in \pi_1(M) - \{1\}$ with $\|\alpha\|_{\mathcal S} \leq n$, the geodesic/translation length of $\alpha$ satisfies $\ell_\rho(\alpha) \leq C \cdot n$. If we can find such a constant, then

\begin{align}
\DM_{{\pi_1(M)},\mathcal S}(n) = &\max \left \{D_{\pi_1(M)}(\alpha) :\alpha \in \pi_1(M) - \{1\}, \|\alpha\|_{\mathcal S} \leq n  \right \} \nonumber \\
\leq &\max \left \{D_{\pi_1(M)}(\alpha) : \alpha \in \pi_1(M) - \{1\}, \ell_{\rho}(\alpha) \leq C \cdot n \right\} = \DM_{M, \rho}(C\cdot n), \nonumber
\end{align}

\noindent and we have $\DM_{{\pi_1(M)},\mathcal S} \preceq \DM_{M, \rho}$.

From the Svarc-Milnor Lemma, there exist constants $a, b >0$ such that 

\begin{equation}\label{eq:Svarc}
 \frac{1}{a} \cdot \|\alpha\|_{\mathcal S} - b \leq d(x, \alpha \cdot x) \leq a \cdot \|\alpha\|_{\mathcal S} +b
 \end{equation}

\noindent  for all $\alpha \in \pi_1(M)$, where $d$ measures distance in ${\bf H}^n$. In particular, if $\|\alpha\|_{\mathcal S} \leq n$, then $d(x, \alpha \cdot x) \leq C \cdot n$ for a constant $C>0$, depending only on $a$ and $b$. By the comments above on translation length, we have that $\ell_\rho(\alpha) \leq d(x, \alpha \cdot x) \leq C \cdot n$, which is the desired inequality.

Next, we demonstrate that $\DM_{M, \rho} \preceq \DM_{{\pi_1(M)},\mathcal S}$. With $x \in \widetilde M$ our fixed point from above, there exists a constant $D > 0$ such that every point $z \in \widetilde M$ is within distance $D$ of $h \cdot x$ for some $h \in \pi_1(M)$. 

For $\alpha \in \pi_1(M) -\{1\}$, let $g \cdot x$ be a point within distance $D$ of the geodesic axis, $L_\alpha$, for $\alpha$. Then $x$ is within distance $D$ of the geodesic axis, $L_{ g\alpha g^{-1}}$, for $g \alpha g^{-1}$. Let $y$ be the point on $L_{g \alpha g^{-1}}$ closest to $x$, so that $$d(x, y) = d( g \alpha g^{-1} \cdot x, g \alpha g^{-1} \cdot y) < D.$$

\noindent Then by the triangle inequality, 
\begin{equation}\label{eq:triangle}
d(x, g \alpha g^{-1}\cdot x) \leq 2D + d(y, g \alpha g^{-1} \cdot y) = 2D + \ell_\rho(g \alpha g^{-1}).
\end{equation}

Combining equations \ref{eq:Svarc} and \ref{eq:triangle}, we conclude that for every $\alpha \in \pi_1(M) -\{1\}$, there exists $g \in \pi_1(M)$ such that 

\begin{equation}\label{eq:lengths}
\|g \alpha g^{-1}\|_{\mathcal S} \leq C' \cdot \ell_\rho(g \alpha g^{-1}),
\end{equation}

\noindent for a constant $C' >0$ that does not depend on $\alpha$. The two key facts that we will need to complete the proof of the lemma are that $\ell(\alpha) = \ell(g \alpha g^{-1})$ and that $D_{\pi_1(M)}(\alpha) = D_{\pi_1(M)}(g \alpha g^{-1})$ for all $g \in \pi_1(M)$. Using the fact that $D_{\pi_1(M)}(\alpha) = D_{\pi_1(M)}(g \alpha g^{-1})$, we note that we can redefine the residual finiteness function by letting $\DM_{{\pi_1(M)},\mathcal S}(n)$ be the maximum value of $D_{\pi_1(M)}(\alpha)$ on the set $$ A_n = \left \{\alpha \in \pi_1(M) - \{1\}: \|g\alpha g^{-1}\|_{\mathcal S} \leq n \text{ for some }g \in \pi_1(M) \right \}.$$

Now, if $\alpha \in \pi_1(M) -\{1\}$ with $\ell_\rho(\alpha) \leq n$, then $\ell_\rho(g \alpha g^{-1}) \leq n$. Equation \ref{eq:lengths} gives us $\|g \alpha g^{-1}\|_{\mathcal S} \leq C' n$, so that $\alpha \in A_{C'n}$. Therefore,

\begin{align}
\DM_{M, \rho}(n) = &\max \left \{D_{\pi_1(M)}(\alpha) : \alpha \in \pi_1(M) - \{1\}, \ell_{\rho}(\alpha) \leq n\right\} \nonumber \\
\leq &\max \left \{D_{\pi_1(M)}(\alpha) :\alpha \in \pi_1(M) - \{1\}, \|g\alpha g^{-1}\|_{\mathcal S} \leq C' \cdot n \text{ for some } g \in \pi_1(M)  \right \} \nonumber \\ = &\DM_{{\pi_1(M)},\mathcal S}(C'\cdot n), \nonumber
\end{align}

\noindent We conclude that $\DM_{{\pi_1(M)},\mathcal S} \simeq \DM_{M, \rho}$.

\end{proof}

\subsection{Relationship between growths in the non-compact setting:}
The Svarc-Milnor Lemma plays a crucial role in the proof of Lemma \ref{lem:equivalent}, in particular for showing that $\DM_{{\pi_1(M)},\mathcal S} \preceq \DM_{M, \rho}$, and the compactness condition on $(M, \rho)$ is a necessary hypothesis of the lemma. Of course, this observation leads us the question whether the geodesic residual finiteness growth and the residual finiteness growth are the same for a non-compact, finite volume hyperbolic manifold. The work of Basmajian \cite{Basmajian} examining closed geodesics on non-compact, finite area hyperbolic surfaces suggests that the two growths may differ. Example \ref{example} below uses a sequence of curves that Basmajian studies in \cite{Basmajian} to demonstrate the fact that the geodesic length of an element $\alpha \in \pi_1(\Sigma)$ can be logarithmic in its word length when $\Sigma$ is a non-compact, finite area hyperbolic surface. In fact, the curves can be used to demonstrate that the geodesic length can be logarithmic in the \emph{cyclically reduced word length} $\| \alpha \|^c_{\mathcal S}$, which is the minimal word length over all conjugates of $\alpha$. This relationship suggests that it is strictly harder to establish linear bounds on geodesic residual finiteness growth for non-compact, finite volume hyperbolic manifolds since it would require separating elements from subgroups whose indexes are logarithmic in their cyclically reduced word lengths.

\begin{exmp}\label{example}
Let $\Sigma =\Sigma_{0,3}$ be the three-punctured sphere. Then $\pi_1(\Sigma) = \langle a, b \rangle$ is a free group on two generators, and we fix the obvious generating set $\mathcal{S} = \{a, b, a^{-1}, b^{-1}\}$. We also fix a hyperbolic structure $\rho$ on $\Sigma$ by choosing as a fundamental domain the region in ${\bf H}^2$ bounded by the vertical lines with real part -1 and 1, and the two semi-circles orthogonal to $\partial {\bf H}^2$ with endpoints -1, 0 and 0, 1. Then, we can identify $a$ with the parabolic isometry of ${\bf H}^2$ associated to the matrix $\begin{pmatrix}
1 & 2 \\ 0 & 1
\end{pmatrix}$ and $b$ with the matrix $\begin{pmatrix}
1 & 0 \\ 2 & 1
\end{pmatrix}$. Consider the element $\gamma_n = a b^n \in \pi_1(\Sigma)$. Then $\gamma_n$ has (cyclically reduced) word length $n +1$, and we see that the matrix representing $\gamma_n$ has trace $2 +4n$. Letting $\ell = \ell_\rho(\gamma_n)$, we have that $\cosh(\ell/2) = 1+2n$, and thus, the geodesic length of $\gamma_n$ grows logarithmically in the (cyclically reduced) word length of $\gamma_n$.

\end{exmp}

It should also be noted that, by \cite{PotVin}, there exist non-compact, finite volume all right polyhedra in dimension at least up to 8. It would be interesting to explore geometric methods towards bounding residual finiteness for hyperbolic manifolds that admit a totally geodesic immersion to a non-compact reflection orbifold associated to such a polyhedron. It seems that the geometric methods used here would need to be modified in the non-compact case. 

\newpage

\begin{bibdiv}
\begin{biblist}

\bib{Agol}{article}{
title= {Geometrically Finite Subgroup Separability for the Figure-8 Knot Group}
author= {I. Agol}
journal={Preliminary Report/ Private Communication}
}

\bib{AgolGrovesManning}{article}{
title={The virtual Haken conjecture, with an appendix by I. Agol, D. Groves and J.
Manning},
author={I. Agol },
journal={Documenta Math.},
volume={18},
year={2013},
pages={1045--1087}
}

\bib{ALR}{article}{
title={The Bianchi groups are separable on geometrically finite subgroups},
author={I. Agol}
author= {D. D. Long}
author= {A. W. Reid}
journal={Ann. of Math. (2)},
volume={152},
number = {3}
date={2001},
pages={599--621}
}

\bib{Basmajian}{article}{
title={Universal length bounds for non-simple closed geodesics on hyperbolic surfaces},
author={A. Basmajian},
journal={J. Topology},
volume={6(2)}
date={2013},
pages={513-534},
}

\bib{Beardon}{book}{
title={The Geometry of Discrete Groups},
author={A.F. Beardon},
date={1983},
publisher={Springer-Verlag},
address={Berlin and New York}
}

\bib{Bou1}{article}{
title={Quantifying Residual Finiteness},
author={K. Bou-Rabee},
journal={Journal of Algebra},
volume={323}
date={2010},
pages={729--737}
}

\bib{BHP}{article}{
 AUTHOR = {K.~Bou-Rabee and M.~Hagen and P. Patel},
TITLE = {Quantifying residual finiteness of virtually special groups},
JOURNAL ={Math. Z.},
VOLUME={to appear},
NOTE={arXiv:1402.6974 [math.GR]}
}

\bib{Bou2}{article}{
title={Asymptotic growth and least common multiples in
groups},
author={K. Bou-Rabee and D. B. McReynolds},
journal={Bull. London Math. Soc.},
volume={43(6)},
date={2011},
pages={1059--1068}
}

\bib{BM14}{article}{
    AUTHOR = {K. Bou-Rabee},
    Author= {D.B. McReynolds},
     TITLE = {Characterizing linear groups in terms of growth properties},
       NOTE = {arXiv:1403.0983},
}

\bib{BM2}{article}{
title={Extremal behavior of divisibility functions},
author={K. Bou-Rabee}
author={D. B. McReynolds}
journal={Geom. Dedicata},
volume={to appear},
note={arXiv:1211.4727 [math.GR]}
}

\bib{BridsonHaefliger}{book}{
   AUTHOR = {M. R. Bridson and A. Haefliger},
    TITLE = {Metric spaces of non-positive curvature},
PUBLISHER = {Springer-Verlag},
  ADDRESS = {Berlin},
     YEAR = {1999},
    PAGES = {xxii+643},
     ISBN = {3-540-64324-9},
}

\bib{Buskin}{article}{
title={Efficient separability in free groups},
author={N. V. Buskin},
journal={Siberian Mathematical Journal},
volume={50(4)}
date={2009},
pages={603--608}
}

\bib{Haglund}{article}{
title={Finite index subgroups of graph products},
author={F. Haglund},
journal={Geom. Dedicata},
volume={135},
date={2008},
pages={167--209}
}

\bib{haglundwise}{article}{
    AUTHOR = {F. Haglund},
    author={D.T. Wise},
     TITLE = {Special cube complexes},
   JOURNAL = {Geom. Funct. Anal.},
      VOLUME = {17},
      YEAR = {2008},
    NUMBER = {5},
     PAGES = {1551--1620},
      }
      
 \bib{HaglundWiseCoxeter}{article}{
   author = {F. Haglund and D.T. Wise},
   title = {Coxeter Groups are virtually special},
   journal={Advances in Mathematics},
   volume={224},
   pages={1890--1903},
   year={2010},
}

\bib{Kass}{article}{
title={Bounding the Residual Finiteness of Free Groups},
author={M. Kassabov and F. Matucci},
journal={Proc. Amer. Math. Soc.},
volume={139},
number={7}
date={July 2011},
pages={2281--2286}
}

\bib{KozmaThom}{article}{
title={Divisibility and Laws in Finite Simple Groups},
author={G. Kozma}
author={A. Thom}
journal={Preprint},
note={arXiv:1403.2324 [math.GR]}
}

\bib{Malcev}{article}{
title={On the faithful representation of infinite groups by matrices},
author={A. I. Mal'cev},
journal={Mat. Sb.},
volume={8(50)}
date={1940}
pages={405--422}
}

\bib{Meyerhoff}{article}{
title={A lower bound for the volume of hyperbolic 3--manifolds},
author={R. Meyerhoff},
journal={Canad. J. Math.},
volume={39(5)}
date={1987}
pages={1038--1056}
}

\bib{PP12}{article}{
title={On a Theorem of Peter Scott},
author={Patel, P.},
journal={Proc. Amer. Math. Soc. },
volume={142 (8)},
date={2014},
pages={2891--2906}
}

\bib{Patel13}{thesis}{
title={Quantifying Algebraic Properties of Surface Groups and 3-Manifold Groups in Terms of Geometric Data},
author={P. Patel},
school={Rutgers University},
year={2013}
}

\bib{PotVin}{article}{
title={On right-angled reflection groups in hyperbolic spaces},
author={L. Potyagailo and \'{E}.B. Vinberg},
journal={Comment. Math. Helv.},
volume={80(1)},
date={2005},
pages={63--73}
}

\bib{Rivin}{article}{
title={Geodesics with one self-intersection, and other stories},
author={I. Rivin},
journal={Adv. Math},
volume={231 (5)},
date={2012},
pages={2391--2412}
}

\bib{Scott}{article}{
title={Subgroups of Surface Groups Are Almost Geometric},
author={Scott, P.},
journal={J. London Math. Soc.},
volume={(2) 17},
date={1978},
pages={555--565}
}

\bib{Scott2}{article}{
title={Correction to `Subgroups of Surface Groups Are Almost Geometric'},
author={P. Scott},
journal={J. London Math. Soc.},
volume={(2) 32},
date={1985},
pages={217--220}
}

\bib{Todhunter}{book}{
   AUTHOR = {Todhunter},
    TITLE = {Spherical Trigonometry},
PUBLISHER = {Project Gutenberg eBook},
     YEAR = {2006},
}

\bib{Vinberg}{article}{
title={Absence of crystallographic groups of reflections in Lobachevski\u{i} spaces of large dimension},
author={\`{E}.B. Vinberg},
journal={Trudy Moskov. Mat. Obshch.},
volume={47},
date={1984},
Issue={246}
pages={68--102}
}

\bib{Wise:QCH}{article}{
    author={Daniel T. Wise},
    title={The structure of groups with a quasiconvex hierarchy},
    Note={205 pp. Preprint 2011},
}

\end{biblist}
\end{bibdiv}

\end{document}